\newtheorem{theorem}{Theorem}[section]
\newtheorem{lemma}[theorem]{Lemma}
\newtheorem{proposition}[theorem]{Proposition}
\newtheorem{corollary}[theorem]{Corollary}
\theoremstyle{definition}
\newtheorem{definition}[theorem]{Definition}
\newtheorem{example}[theorem]{Example}
\theoremstyle{remark}
\newtheorem{remark}[theorem]{Remark}
\numberwithin{equation}{section}
\begin{document}

\title[Separated Pairs of Submodules in Hilbert $C^*$-modules]{Separated Pairs of Submodules in Hilbert $C^*$-modules}

\author[R. Eskandari,  W. Luo, M.S. Moslehian, Q. Xu, H. Zhang]{R. Eskandari$^{1}$,  W. Luo$^2$, M. S. Moslehian$^3$, Q. Xu$^4$ \MakeLowercase{and} H. Zhang $^5$ }

\address{$^1$Department of Mathematics Education, Farhangian University, P.O. Box 14665-889, Tehran, Iran.}
\email{Rasoul.eskandari@cfu.ac.ir; eskandarirasoul@yahoo.com}

\address{$^2$Department of Statistics and Mathematics, Shanghai Lixin University of Accounting and
	Finance, Shanghai 201209, PR China}
\email{luoweipig1@163.com}

\address{$^3$Department of Pure Mathematics, Center of Excellence in Analysis on Algebraic Structures (CEAAS), Ferdowsi University of Mashhad, P. O. Box 1159, Mashhad 91775, Iran.}
\email{moslehian@um.ac.ir; moslehian@yahoo.com}

\address{$^4$Department of Mathematics, Shanghai Normal University, Shanghai 200234, PR China}
\email{qingxiang\_xu@126.com}

\address{$^4$School of Mathematics and Statistics, Shangqiu Normal University, Shangqiu 476000, PR China.}
\email{csqam@163.com}

\renewcommand{\subjclassname}{\textup{2020} Mathematics Subject Classification} \subjclass[]{46L08, 46C05, 47A05, 47A30.}

\keywords{Separated pair; idempotent; Hilbert $C^*$-module; angle.}

\begin{abstract}
We introduce the notion of the separated pair of closed submodules in the setting of Hilbert $C^*$-modules. We demonstrate that even in the case of Hilbert spaces this concept has several nice characterizations enriching the theory of separated pairs of subspaces in Hilbert spaces. Let $\mathscr H$ and $\mathscr K$ be orthogonally complemented closed submodules of a Hilbert $C^*$-module $\mathscr E$. We establish that $ (\mathscr H,\mathscr K)$ is a separated pair in $\mathscr{E}$ if and only if there are idempotents $\Pi_1$ and $\Pi_2$ such that $\Pi_1\Pi_2=\Pi_2\Pi_1=0$ and $\mathscr R(\Pi_1)=\mathscr H$ and $\mathscr R(\Pi_2)=\mathscr K$. We show that $\mathscr R(\Pi_1+\lambda\Pi_2)$ is closed for each $\lambda\in \mathbb{C}$ if and only if $\mathscr R(\Pi_1+\Pi_2)$ is closed.
	
We use the localization of Hilbert $C^*$-modules to define the angle between closed submodules. We prove that if $(\mathscr H^\perp,\mathscr K^\perp)$ is concordant, then  $(\mathscr H^{\perp\perp},\mathscr K^{\perp\perp})$ is a separated pair if the cosine of this angle is less than one. We also present some surprising examples to illustrate our results.
\end{abstract}

\maketitle
\section{Introduction}\label{sec:introduction}
Throughout this paper, $\mathscr{A}$ denotes a $C^*$-algebra. We denote by  $\mathrm{S}(\mathscr A ) $ and  $\mathrm{PS}(\mathscr A ) $  the set of states and the set of pure states on  $\mathscr{A}$, respectively. We assume that  $\mathscr{E}$ and $\mathscr F$ are Hilbert $C^*$-modules over $\mathscr A$.  The set of all adjointable operators from ${\mathscr E}$ into $\mathscr{F}$ is represented by $\mathcal{L}(\mathscr{E}, \mathscr{F})$, with the abbreviation $\mathcal{L}(\mathscr{E})$ if $\mathscr{E}=\mathscr{F}$. When we deal with a Hilbert space $\mathcal{H}$, we denote  $\mathcal{L}(\mathcal{H})$ by  $\mathbb{B}(\mathcal{H})$. The identity of an algebra  is denoted  by $I$. We signify by $\mathscr R(T)$ and $\mathscr N(T)$ the range and nullity of an operator $T$, respectively. For more information about Hilbert $C^*$-modules and their geometry, see \cite{FRA1, Lan, MT}.

A submodule $\mathscr M\subseteq \mathscr E$ is called \emph{orthogonally complemented} in $\mathscr E$ if $\mathscr M \oplus\mathscr M^\perp=\mathscr E$, where $\mathscr M^\perp=\{x\in \mathscr E:\langle x,y\rangle=0,\mbox{~for all~} y\in \mathscr M\}$. In this case $\mathscr M$ is closed, and we use the notation $P_ \mathscr M$ to denote the projection from $\mathscr{E}$ onto $\mathscr M$. Unlike Hilbert spaces, a closed submodule is not necessarily orthogonally complemented. If $T\in\mathcal{L}(\mathscr E,\mathscr F)$ has closed range, then $\mathscr R(T)$ and $\mathscr N(T)$ are orthogonally complemented; see \cite[Theorem 3.2]{Lan}.

In the framework of Hilbert spaces, given a bounded linear idempotent $\Pi$, let $P$ and $Q$ be the projections onto the range $\mathscr{R}(\Pi)$ and the null space $\mathscr{N}(\Pi)$ of $\Pi$, respectively. One research field is concerned with the relationships between $\Pi$, $P$, and $Q$, and some interesting results can be found in the literature; see \cite{Ando, Bottcher-Spitkovsky}.

A subspace $\mathscr M$ of Hilbert space $\mathscr E$ is said to be an \emph{operator range} if there is a bounded linear operator $A$ such that $\mathscr M=\mathscr{R}(A)$. The set $\mathcal{L}$ of all operator ranges constitutes a lattice with respect to the vector addition and the set intersection. It is known that an operator range
$\mathscr R(A)$ is complemented in the lattice $\mathcal{L}$ (in the sense that there is an operator range $\mathscr R(B)$ such that $\mathscr R(A)\cap\mathscr R(B)=0$ and $\mathscr R(A)+\mathscr R(B)$ is closed) if and only if $\mathscr R(A)$ is closed; see \cite[Theorem 2.3]{Fillmore}.
In \cite[Proposition 3.7]{Dixmier}, the nonclosedness of the sum of two disjoint operator ranges is studied.

It is clear that for any idempotent $\Pi\in \mathscr{L}(\mathscr{E})$,
\begin{equation}\label{equ:observation-1}
\mathscr {R}(\Pi)\cap\mathscr {R}(I-\Pi)=0\quad\mbox{and}\quad \mathscr{R}(\Pi)+\mathscr{R}(I-\Pi)=\mathscr{E}.
\end{equation}
Motivated by this, we give the following key concept.

\begin{definition}
	Let $\mathscr H$ and $\mathscr K$ be closed submodules of $\mathscr E$. Then we say that $(\mathscr H,\mathscr K)$ is a \emph{separated pair} if
	\begin{equation}\label{eq2a}
	\mathscr H\cap\mathscr K=0 \mathrm{~and }~ \mathscr H+\mathscr K~\mathrm{is ~orthogonally~complemented~in~} \mathscr E.
	\end{equation}
\end{definition}
\begin{lemma}\label{lem:ranges of P plus Q equals-1}{\rm (see\,\cite[Proposition~4.6]{Luo-Song-Xu}, \cite[Remark~5.8]{Luo-Moslehian-Xu}, and \cite[Theorem~1]{Tan-Xu-Yan})}
	Let $P,Q\in\mathcal{L}(\mathscr{E})$ be projections. Then the following statements are all equivalent:
	\begin{enumerate}
		\item[{\rm (i)}] $\mathscr{R}(P+Q)$ is closed in $\mathscr{E}$;
		\item[{\rm (ii)}] $\mathscr{R}(P)+\mathscr{R}(Q)$ is closed in $\mathscr{E}$;
		\item[{\rm (iii)}] $\mathscr{R}(I-P)+\mathscr{R}(I-Q)$ is closed in $\mathscr{E}$;
		\item[{\rm (iv)}] $\mathscr{R}(2I-P-Q)$ is closed in $\mathscr{E}$;
		\item[{\rm (v)}] For every complex numbers $\lambda_1$ and $\lambda_2$, $\mathscr{R}(\lambda_1 P+\lambda_2 Q)$ is closed in $\mathscr{E}$.
	\end{enumerate}
	In each case, we have
	\begin{align*}&\mathscr{R}(P)+\mathscr{R}(Q)=\mathscr{R}(P+Q),\\
	&\mathscr{R}(I-P)+\mathscr{R}(I-Q)=\mathscr{R}(2I-P-Q).
	\end{align*}
\end{lemma}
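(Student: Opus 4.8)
The plan is to reduce all five conditions to a single master equivalence, obtained by passing to the ``invertible part'' of $P+Q$, and to read off the two range identities along the way. The backbone is an elementary fact about row operators, which I would establish first: for any $A,B\in\mathcal L(\mathscr E)$ the operator $[A\ B]\colon\mathscr E\oplus\mathscr E\to\mathscr E$, $(x,y)\mapsto Ax+By$, is adjointable, $\mathscr R([A\ B])=\mathscr R(A)+\mathscr R(B)$, and $[A\ B][A\ B]^{*}=AA^{*}+BB^{*}$. Since in a Hilbert $C^{*}$-module a map $T\in\mathcal L(\mathscr E,\mathscr F)$ has closed range if and only if $TT^{*}$ does, and then $\mathscr R(T)=\mathscr R(TT^{*})$ (this is standard; it rests on \cite[Theorem~3.2]{Lan} and the Moore--Penrose inverse, which exists once the range is closed), we obtain: $\mathscr R(A)+\mathscr R(B)$ is closed $\iff$ $\mathscr R(AA^{*}+BB^{*})$ is closed, and in that case $\mathscr R(A)+\mathscr R(B)=\mathscr R(AA^{*}+BB^{*})$.

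Taking $(A,B)=(P,Q)$, where $AA^{*}+BB^{*}=P^{2}+Q^{2}=P+Q$, yields (i)$\Leftrightarrow$(ii) together with $\mathscr R(P)+\mathscr R(Q)=\mathscr R(P+Q)$; taking $(A,B)=(I-P,I-Q)$, where $AA^{*}+BB^{*}=2I-P-Q$, yields (iii)$\Leftrightarrow$(iv) together with $\mathscr R(I-P)+\mathscr R(I-Q)=\mathscr R(2I-P-Q)$. Moreover (v)$\Rightarrow$(ii) is the case $\lambda_{1}=\lambda_{2}=1$, and if one $\lambda_{i}$ vanishes then $\mathscr R(\lambda_{1}P+\lambda_{2}Q)$ is $\{0\}$, $\mathscr R(P)$, or $\mathscr R(Q)$, all closed. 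Hence everything follows once we prove, assuming $\mathscr R(P+Q)$ closed: (a) $\mathscr R(2I-P-Q)$ is closed, and (b) $\mathscr R(\lambda_{1}P+\lambda_{2}Q)$ is closed for all nonzero $\lambda_{1},\lambda_{2}$; the converse of (a) then comes for free by interchanging $P,Q$ with $I-P,I-Q$.

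For the reduction, observe that $P,Q\geq 0$ forces $\mathscr N(P+Q)=\mathscr N(P)\cap\mathscr N(Q)$. Set $M:=\mathscr R(P+Q)$; by \cite[Theorem~3.2]{Lan} it is orthogonally complemented with $M^{\perp}=\mathscr N(P+Q)$, and since $\mathscr R(P),\mathscr R(Q)\subseteq\mathscr R(P)+\mathscr R(Q)=M$ while $P,Q$ annihilate $M^{\perp}$, we may write $P=p\oplus 0$, $Q=q\oplus 0$ with respect to $\mathscr E=M\oplus M^{\perp}$, where $p,q\in\mathcal L(M)$ are projections and $p+q$ is invertible. Then $2I-P-Q=(2I_{M}-p-q)\oplus 2I_{M^{\perp}}$ and $\lambda_{1}P+\lambda_{2}Q=(\lambda_{1}p+\lambda_{2}q)\oplus 0$, so (a) and (b) become: $2I_{M}-p-q$ and each $\lambda_{1}p+\lambda_{2}q$ ($\lambda_{1},\lambda_{2}\neq 0$) have closed range, given $p+q$ invertible. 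I would now use the commuting identity $(p-q)^{2}=(p+q)\bigl(2I_{M}-p-q\bigr)$, so that $2I_{M}-p-q=(p+q)^{-1}(p-q)^{2}$ and therefore $\mathscr R(2I_{M}-p-q)=\mathscr R\bigl((p-q)^{2}\bigr)$, which is closed exactly when $\mathscr R(p-q)$ is; and, writing $p,q$ in terms of $p\pm q$, for $\lambda_{1}+\lambda_{2}\neq 0$ one gets $\mathscr R(\lambda_{1}p+\lambda_{2}q)=(p+q)^{1/2}\,\mathscr R(I_{M}+\alpha R)$ with $\alpha=\tfrac{\lambda_{1}-\lambda_{2}}{\lambda_{1}+\lambda_{2}}$ and $R=(p+q)^{-1/2}(p-q)(p+q)^{-1/2}=R^{*}$, while for $\lambda_{1}+\lambda_{2}=0$ it equals $\lambda_{1}\,\mathscr R(p-q)$. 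For $\alpha\notin\mathbb R$ the spectrum of $I_{M}+\alpha R$ lies on a line avoiding $0$, so $I_{M}+\alpha R$ is invertible; thus only the self-adjoint cases survive.

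The genuine obstacle is the final step: with $p+q$ invertible, show that $p-q$ — equivalently $R$, equivalently $2I_{M}-p-q$ — has closed range, and that every $-1/\alpha$ ($\alpha\in\mathbb R\setminus\{0\}$) is isolated in, or absent from, $\spc(R)$. For an arbitrary positive $C\in[0,2I_{M}]$ the conditions ``$0$ isolated in/absent from $\spc(C)$'' and ``$2$ isolated in/absent from $\spc(C)$'' are logically independent, so one must exploit that $C=p+q$ arises from two projections. I would invoke the matrix model of a pair of projections in a Hilbert $C^{*}$-module — the module analogue of Halmos' two-subspace theorem, in the spirit of \cite{Luo-Song-Xu,Luo-Moslehian-Xu,Tan-Xu-Yan}: off the ``commuting'' direct summands, on which $p,q\in\{0_{M},I_{M}\}$ and $2I_{M}-p-q$ and $\lambda_{1}p+\lambda_{2}q$ are scalar multiples of projections (hence trivially of closed range), the remaining summand carries an angle operator $\Theta$ with $p\sim\left(\begin{smallmatrix}I&0\\0&0\end{smallmatrix}\right)$ and $q\sim\left(\begin{smallmatrix}\cos^{2}\Theta&\cos\Theta\sin\Theta\\\cos\Theta\sin\Theta&\sin^{2}\Theta\end{smallmatrix}\right)$, where invertibility of $p+q$ forces $\sin\Theta$ (and $\cos\Theta$) invertible; a short computation then shows $2I_{M}-p-q$ and every $\lambda_{1}p+\lambda_{2}q$ are invertible on that summand — indeed $\spc\bigl((p+q)^{-1}q\bigr)\subseteq\{0,\tfrac12,1\}$, so $\spc(R)\subseteq\{-1,0,1\}$. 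Reassembling the summands gives (a) and (b), hence all the equivalences and the two range identities. An alternative way to handle the symmetry $(P,Q)\leftrightarrow(I-P,I-Q)$ underlying (i)$\Leftrightarrow$(iii) is via the Friedrichs/Dixmier angle between the submodules, which is insensitive to passing to orthogonal complements — the route suggested by the localization-of-angles technology announced in the introduction.
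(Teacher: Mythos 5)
The paper offers no proof of this lemma at all --- it is imported verbatim from \cite{Luo-Song-Xu}, \cite{Luo-Moslehian-Xu}, and \cite{Tan-Xu-Yan} --- so there is no internal argument to compare yours with; I can only assess your outline on its own terms. Most of it checks out: the row-operator observation correctly yields (i)$\Leftrightarrow$(ii), (iii)$\Leftrightarrow$(iv) and both range identities; the compression to $M=\mathscr R(P+Q)$ with $M^\perp=\mathscr N(P)\cap\mathscr N(Q)$ is legitimate and makes $p+q$ invertible in $\mathcal L(M)$; the identity $(p-q)^2=(p+q)(2I_M-p-q)=2(p+q)-(p+q)^2$ and the factorization $\lambda_1p+\lambda_2q=\tfrac{\lambda_1+\lambda_2}{2}(p+q)^{1/2}(I_M+\alpha R)(p+q)^{1/2}$ correctly reduce everything to spectral statements about $R=(p+q)^{-1/2}(p-q)(p+q)^{-1/2}$; and the symmetry $(P,Q)\leftrightarrow(I-P,I-Q)$ does close the cycle.

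The gap is exactly where you flag ``the genuine obstacle.'' You justify $\spc(R)\subseteq\{-1,0,1\}$ (equivalently, closed range of $p-q$ and of $2I_M-p-q$) by invoking ``the module analogue of Halmos' two-subspace theorem.'' In a Hilbert $C^*$-module that theorem is \emph{not} unconditional: the decomposition into the four corner summands plus a generic part requires $\mathscr R(p)\cap\mathscr R(q)$, $\mathscr R(p)\cap\mathscr N(q)$, etc.\ to be orthogonally complemented, which is precisely the kind of hypothesis \cite{Luo-Moslehian-Xu} must impose (and which fails in general). Worse, $\mathscr R(p)\cap\mathscr R(q)=\mathscr N(2I_M-p-q)$, so the complementability needed to set up the model is essentially equivalent to the closed-range statement you are trying to extract from it --- the appeal is circular in this setting. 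The step can be repaired without Halmos, purely at the $C^*$-algebra level: put $c=p+q$, $a=p-q$, $b=I_M-c$; then $a^2+b^2=I_M$, $ab=-ba$, hence $af(c)=f(2I_M-c)a$ for continuous $f$, and $f(b)a^2f(b)=af(-b)^2a$ shows $\spc(b)\setminus\{\pm1\}$ is symmetric about $0$. Invertibility of $c$ removes a neighborhood of $1$ from $\spc(b)$, hence by symmetry $\spc(c)\subseteq\{2\}\cup[\epsilon,2-\epsilon]$; the spectral projection $e$ of $c$ at the isolated point $2$ then satisfies $ae=ea=0$, and on $(I-e)M$ one computes $(ac^{-1})^2=a^2(2I-c)^{-1}c^{-1}=I$, giving $\spc(R)\subseteq\{-1,0,1\}$. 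With that substitution for your final paragraph, the outline closes.
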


\begin{remark}\label{rm separted}
	Let $\mathscr H$ and $\mathscr K$ be orthogonally complemented closed submodules and let $P$ and $Q$ be projections onto $\mathscr H$ and $\mathscr K$, respectively. It follows  from \cite[Lemma~2.3]{Luo-Moslehian-Xu} that $\overline{\mathscr H+\mathscr K}=\overline{\mathscr R(P+Q)}$. Therefore, \cite[Theorem 3.2]{Lan}  and Lemma~\ref{lem:ranges of P plus Q equals-1} entail that for orthogonally complemented closed submodules \eqref{eq2a} is equivalent to
	\[
	\mathscr H\cap\mathscr K=0 \mathrm{~and }~ \mathscr H+\mathscr K~\mathrm{is ~closed~in~} \mathscr E.
	\].
\end{remark}	
Note that for every idempotent $\Pi$ on $\mathcal{E}$, the pair $\big(\mathscr R(\Pi), \mathscr {R}(I-\Pi)\big)$ is always separated.

Our investigation of the separated pairs is also motivated by the Dixmier angle. Let $\mathscr H$ and $\mathscr K$ be two orthogonally complemented closed submodules of a Hilbert $C^*$-module $\mathscr{E}$. The \emph{Dixmier angle (minimum angle)}, denoted by $\alpha_{0} (\mathscr H,\mathscr K)$, is the unique angle in $[0,\frac{\pi}{2}]$ whose cosine is equal to $c_{0}(\mathscr H,\mathscr K)$, where
\begin{equation}\label{definition of Dixmier angle}
c_{0}(\mathscr H,\mathscr K)=\sup\big\{ \| \langle x,y\rangle \|: x\in \mathscr H, y\in \mathscr K, \Vert x\Vert\le 1, \Vert y\Vert\le 1\big\}.
\end{equation}
As in the Hilbert space case  (see \cite[Lemma~2.10]{Deutsch}), it can be shown  that
\begin{equation}\label{equ:formula for c H K++}c_{0}(\mathscr H,\mathscr K)=\Vert P_\mathscr H P_\mathscr K \Vert.
\end{equation}
Hence, $\|P_\mathscr HP_\mathscr K\|<1$ if and only if $\alpha_{0} (\mathscr H,\mathscr K)>0$ and this occurs if and only if $(\mathscr H,\mathscr K)$ is a separated pair as the following characterization of such separated pairs of submodules shows.

\begin{lemma}\label{lem:norm of two projections less than one}{\rm \cite[Lemma~5.10]{Luo-Moslehian-Xu}}  Let $\mathscr{H}$ be a Hilbert module over the $C^*$-algebra $\mathfrak{A}$, and let  $P,Q\in\mathcal{L}(\mathscr H)$ be projections. Then the following statements are equivalent:
	\begin{enumerate}
		\item[{\rm (i)}] $\Vert PQ\Vert<1$;
		\item[{\rm (ii)}] $\mathscr{R}(P)\cap \mathscr{R}(Q)=0$ and $\mathscr{R}(P)+\mathscr{R}(Q)$ is closed;
		\item[{\rm (iii)}] $\mathscr{R}(I-P)+\mathscr{R}(I-Q)=\mathscr H$.
	\end{enumerate}
\end{lemma}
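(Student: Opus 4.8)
The plan is to prove the cyclic chain $\mathrm{(i)}\Rightarrow\mathrm{(ii)}\Rightarrow\mathrm{(iii)}\Rightarrow\mathrm{(i)}$, arguing throughout with operator inequalities rather than with the sequential/angle arguments that are available over a Hilbert space.

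For $\mathrm{(i)}\Rightarrow\mathrm{(ii)}$, disjointness is immediate: if $x\in\mathscr R(P)\cap\mathscr R(Q)$ then $PQx=Px=x$, so $\|x\|=\|PQx\|\le\|PQ\|\,\|x\|$, which forces $x=0$. For the closedness of $\mathscr R(P)+\mathscr R(Q)$ I would pass to $\mathscr R\big((I-P)Q\big)$. From $Qx=PQx+(I-P)Qx$ one gets $\mathscr R(P)+\mathscr R(Q)=\mathscr R(P)+\mathscr R\big((I-P)Q\big)$, and since $\mathscr R\big((I-P)Q\big)\subseteq\mathscr R(I-P)=\mathscr R(P)^\perp$ this is an \emph{orthogonal} sum. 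Set $c=\|PQ\|<1$. Since $QPQ=(PQ)^*(PQ)$ has norm $c^2$, we have $QPQ\le c^2I$, hence $QPQ=Q(QPQ)Q\le c^2Q$, and therefore
\[
\big[(I-P)Q\big]^*\big[(I-P)Q\big]=Q(I-P)Q=Q-QPQ\ \ge\ (1-c^2)\,Q .
\]
Consequently $Q(I-P)Q$ restricts to an invertible operator on $\mathscr R(Q)$, so its range is the closed submodule $\mathscr R(Q)$; hence $\mathscr R\big((I-P)Q\big)$ is closed and, by \cite[Theorem~3.2]{Lan}, orthogonally complemented. Complementing it inside $\mathscr R(P)^\perp$ shows that the orthogonal sum $\mathscr R(P)\oplus\mathscr R\big((I-P)Q\big)=\mathscr R(P)+\mathscr R(Q)$ is orthogonally complemented, in particular closed.

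For $\mathrm{(ii)}\Rightarrow\mathrm{(iii)}$: by Lemma~\ref{lem:ranges of P plus Q equals-1}, closedness of $\mathscr R(P)+\mathscr R(Q)$ forces $\mathscr R(I-P)+\mathscr R(I-Q)$ to be closed and equal to $\mathscr R(2I-P-Q)$; by \cite[Theorem~3.2]{Lan} this submodule is orthogonally complemented, and its orthogonal complement is $\mathscr R(I-P)^\perp\cap\mathscr R(I-Q)^\perp=\mathscr R(P)\cap\mathscr R(Q)$, which vanishes by $\mathrm{(ii)}$. Hence $\mathscr R(I-P)+\mathscr R(I-Q)=\mathscr H$.

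For $\mathrm{(iii)}\Rightarrow\mathrm{(i)}$ — the step where the classical proof (which extracts an approximate eigenvector of $QPQ$ at $1$) has no direct analogue, and which I expect to be the main obstacle — I would argue as follows. From $\mathrm{(iii)}$ and Lemma~\ref{lem:ranges of P plus Q equals-1}, $\mathscr H=\mathscr R(I-P)+\mathscr R(I-Q)=\mathscr R(2I-P-Q)$, so the positive self-adjoint operator $2I-P-Q$ is surjective, hence injective (its kernel equals $\mathscr R(2I-P-Q)^\perp=0$), hence invertible; being positive and invertible it satisfies $2I-P-Q\ge\varepsilon I$ for some $\varepsilon>0$. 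For $u\in\mathscr R(Q)$ we have $(I-Q)u=0$, so $\langle(I-P)u,u\rangle=\langle(2I-P-Q)u,u\rangle\ge\varepsilon\langle u,u\rangle$; equivalently $Q(I-P)Q\ge\varepsilon Q$, i.e.\ $QPQ\le(1-\varepsilon)Q\le(1-\varepsilon)I$. Therefore $\|PQ\|^2=\|QPQ\|\le1-\varepsilon<1$, which is $\mathrm{(i)}$. The essential point is thus to squeeze a uniform gap $\varepsilon$ out of the invertibility of $2I-P-Q$ and to transfer it to the submodule $\mathscr R(Q)$ via the compression $Q(\cdot)Q$.
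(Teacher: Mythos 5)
The paper never proves this lemma: it is imported wholesale as \cite[Lemma~5.10]{Luo-Moslehian-Xu}, so there is no in-house argument to compare yours against. Your proof is correct, and it is self-contained modulo exactly the two tools the paper already has on the table, namely Lemma~\ref{lem:ranges of P plus Q equals-1} and \cite[Theorem~3.2]{Lan}; in particular you succeed in avoiding any approximate-eigenvector or spectral argument for $QPQ$, replacing it in (iii)$\Rightarrow$(i) by the observation that surjectivity of the positive operator $2I-P-Q$ forces $2I-P-Q\ge\varepsilon I$, which compresses under $Q(\cdot)Q$ to $QPQ\le(1-\varepsilon)Q$. That is the right module-theoretic substitute for the classical Hilbert-space proof, and the rest of the cycle (the orthogonal splitting $\mathscr R(P)+\mathscr R(Q)=\mathscr R(P)\oplus\mathscr R\big((I-P)Q\big)$ together with $Q(I-P)Q\ge(1-c^2)Q$) is sound. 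Two small points are stated faster than they deserve, though neither is a gap. First, the step from invertibility of $Q(I-P)Q$ on $\mathscr R(Q)$ to closedness of $\mathscr R\big((I-P)Q\big)$ uses that closedness of $\mathscr R(A^*A)$ forces closedness of $\mathscr R(A)$; the quickest honest justification here is that $\|(I-P)Qu\|^2=\big\|\big\langle Q(I-P)Qu,u\big\rangle\big\|\ge(1-c^2)\|Qu\|^2$, so $(I-P)Q$ is bounded below on the closed submodule $\mathscr R(Q)$, whose image is all of $\mathscr R\big((I-P)Q\big)$. Second, in (iii)$\Rightarrow$(i) the inequality $(1-\varepsilon)Q\le(1-\varepsilon)I$ requires $\varepsilon\le 1$, so you should first replace $\varepsilon$ by $\min\{\varepsilon,1\}$, which is harmless.
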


The paper is organized as follows. In Section~\ref{sec:characterizations of complemented projections}, we focus on the study of a separated pair of orthogonally complemented closed submodules in terms of the associated idempotents. Our first result is Theorem~\ref{th equivalent of separation},
where two natural idempotents are constructed when a separated pair is given. The key point of this construction is the verification of the adjointability of these two idempotents.  As an application, the Moore-Penrose invertibility of certain operators associated with a separated pair is clarified. Specifically, a generalization of \cite[Theorem~3.8]{Ando} is obtained; see Proposition~\ref{thm:sth concerns M-P inverse} for the details. We also present an example showing that there is a separated pair $(\mathscr H,\mathscr K)$ and idempotents $\Pi_1,\Pi_2$ with $\mathscr R(\Pi_1)=\mathscr H$ and $\mathscr R(\Pi_2)=\mathscr K$ such that $\mathscr R(\Pi_1+\Pi_2)$ is not closed. An alternative description of the idempotents constructed in the proof of Theorem~\ref{th equivalent of separation} is given.

In Section ~\ref{section HC}, we   generalize the Dixmier and Friedichs angle  between two closed submodules. Let $\mathscr E$ be a Hilbert $C^*$-module over a $C^*$-algebra $\mathscr A$, and let $\mathscr H$ and $\mathscr K$ be closed submodules of $\mathscr E$.  Denote by $\mathrm{S}(\mathscr A ) $   the set of all states on $\mathscr{A}$. Following the constructions presented in \cite{Pa}, we associate  to each $f\in \mathrm{S}(\mathscr A )$ a Hilbert space $\mathscr H_f$. The term of the concordant pair was introduced recently  in \cite[Definition~1.1]{Bram} under the restriction that $\mathscr H$ and $\mathscr K$ are both orthogonally complemented in $\mathscr E$. In our Definition~\ref{def:concordant}, such a restriction of orthogonal complementarity is no longer employed. A necessary and sufficient condition is provided in Theorem~\ref{th concorant} under which the pair $(\mathscr H,\mathscr K)$ is concordant.
At the end of this section, the cosine of the Dixmier and Friedrichs angle between $\mathscr H$ and $\mathscr K$ is investigated. Finally, a sufficient condition of the separated pair is provided in the last theorem of this section; see Theorem~\ref{thm:local char of separated pair}.

\section{separated pair of orthogonally complemented submodules and Idempotents}\label{sec:characterizations of complemented projections}

We start this section with the following theorem in which we give some equivalent conditions to ensure that a pair of orthogonally complemented closed submodules is separated.
\begin{theorem}\label{th equivalent of separation}
	Let $\mathscr H$ and $\mathscr K$ be  orthogonally complemented closed submodules of $\mathscr E$. The following statements are equivalent:
	\begin{itemize}
		\item[{\rm (i)}]  $(\mathscr H,\mathscr K)$ is a separated pair.
		\item[{\rm (ii)}] There are idempotents $\Pi_1$ and $\Pi_2$ in $\mathcal{L}(\mathscr E)$ such that $\Pi_1\Pi_2=\Pi_2\Pi_1=0$, $\mathscr R(\Pi_1)=\mathscr H$ and $\mathscr R(\Pi_2)=\mathscr K$.
		\item[{\rm (iii)}] There is an idempotent $\Pi\in \mathcal{L}(\mathscr E)$ such that $\mathscr R(\Pi)=\mathscr H$ and $\mathscr K\subseteq\mathscr N(\Pi)$.
	\end{itemize}
\end{theorem}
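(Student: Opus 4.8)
The plan is to prove the cycle $(\mathrm{i})\Rightarrow(\mathrm{ii})\Rightarrow(\mathrm{iii})\Rightarrow(\mathrm{i})$, the only substantial step being $(\mathrm{i})\Rightarrow(\mathrm{ii})$.

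The implication $(\mathrm{ii})\Rightarrow(\mathrm{iii})$ is immediate: take $\Pi=\Pi_1$; then $\Pi$ is an idempotent in $\mathcal{L}(\mathscr E)$ with $\mathscr R(\Pi)=\mathscr H$, and $\Pi_1\Pi_2=0$ forces $\mathscr K=\mathscr R(\Pi_2)\subseteq\mathscr N(\Pi_1)=\mathscr N(\Pi)$. For $(\mathrm{iii})\Rightarrow(\mathrm{i})$ I would first observe $\mathscr H\cap\mathscr K\subseteq\mathscr R(\Pi)\cap\mathscr N(\Pi)=0$. To see that $\mathscr H+\mathscr K$ is closed, take $h_n\in\mathscr H$ and $k_n\in\mathscr K$ with $h_n+k_n\to z$; applying the bounded operator $\Pi$ and using $\Pi h_n=h_n$ and $\Pi k_n=0$ gives $h_n=\Pi(h_n+k_n)\to\Pi z\in\mathscr H$, hence $k_n\to z-\Pi z$, and this limit lies in $\mathscr K$ because $\mathscr K$ is closed; thus $z\in\mathscr H+\mathscr K$. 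As $\mathscr H$ and $\mathscr K$ are orthogonally complemented, Remark~\ref{rm separted} then shows that $(\mathscr H,\mathscr K)$ is a separated pair.

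For the core implication $(\mathrm{i})\Rightarrow(\mathrm{ii})$, put $P=P_{\mathscr H}$, $Q=P_{\mathscr K}$ and $S=P+Q\in\mathcal{L}(\mathscr E)$, a positive operator, and write $\mathscr M=\mathscr H+\mathscr K$. By Remark~\ref{rm separted}, $\mathscr M$ is closed, and Lemma~\ref{lem:ranges of P plus Q equals-1} then gives $\mathscr R(S)=\mathscr R(P)+\mathscr R(Q)=\mathscr M$, so $S$ has closed range and hence admits a Moore--Penrose inverse $S^{\dagger}\in\mathcal{L}(\mathscr E)$ with $SS^{\dagger}=S^{\dagger}S=P_{\mathscr M}$ (concretely, $S^{\dagger}$ is the inverse of the positive invertible operator $S|_{\mathscr M}\in\mathcal{L}(\mathscr M)$ extended by $0$ on $\mathscr M^{\perp}$). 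I then propose
\[
\Pi_1=PS^{\dagger},\qquad \Pi_2=QS^{\dagger},
\]
which belong to $\mathcal{L}(\mathscr E)$ simply as composites of adjointable operators --- this is exactly the place where adjointability, the delicate point of the statement, comes for free once the correct formula is in hand. One checks at once that $\Pi_1+\Pi_2=SS^{\dagger}=P_{\mathscr M}$, that $\mathscr R(\Pi_1)\subseteq\mathscr R(P)=\mathscr H$ and $\mathscr R(\Pi_2)\subseteq\mathscr R(Q)=\mathscr K$. Since $\mathscr E=\mathscr M\oplus\mathscr M^{\perp}$ and $\mathscr H\cap\mathscr K=0$, each $y\in\mathscr E$ has a unique representation $y=a+b+c$ with $a\in\mathscr H$, $b\in\mathscr K$, $c\in\mathscr M^{\perp}$; comparing this with the identity $y=\Pi_1y+\Pi_2y+(I-P_{\mathscr M})y$ forces $\Pi_1$ to act as the identity on $\mathscr H$ and as $0$ on both $\mathscr K$ and $\mathscr M^{\perp}$, and symmetrically for $\Pi_2$. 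It follows that $\Pi_1,\Pi_2$ are idempotents with $\mathscr R(\Pi_1)=\mathscr H$, $\mathscr R(\Pi_2)=\mathscr K$, $\mathscr K\subseteq\mathscr N(\Pi_1)$ and $\mathscr H\subseteq\mathscr N(\Pi_2)$, whence $\Pi_1\Pi_2=\Pi_2\Pi_1=0$.

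I expect the main obstacle to be precisely the adjointability demanded in (ii). Purely algebraically, a separated pair already yields a bounded oblique projection onto $\mathscr H$ along $\mathscr K\oplus\mathscr M^{\perp}$ via the closed graph theorem, but in a Hilbert $C^*$-module a bounded idempotent need not be adjointable, so no soft argument is available; the remedy is to produce the explicit operator $P_{\mathscr H}(P_{\mathscr H}+P_{\mathscr K})^{\dagger}$ and to verify, through the Moore--Penrose calculus and the direct-sum decomposition above, that it coincides with the desired oblique projection. One must also be mindful that $\mathscr H+\mathscr K$ need not be all of $\mathscr E$, which is why $S$ is only Moore--Penrose invertible rather than invertible, and why $SS^{\dagger}$ equals $P_{\mathscr M}$ rather than $I$.
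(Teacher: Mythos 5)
Your proposal is correct, and for the decisive implication (i)$\Rightarrow$(ii) it takes a genuinely different route from the paper. The paper defines $\Pi_1(x+y)=x$, $\Pi_2(x+y)=y$ on $\mathscr H+\mathscr K$, extends by zero on $(\mathscr H+\mathscr K)^\perp$, gets boundedness from the closed graph theorem, and then must work to establish adjointability: it invokes Lemma~\ref{lem:norm of two projections less than one} to obtain $\mathscr E=\mathscr H^\perp+\mathscr K^\perp$, refines this to the direct sum $\mathscr E=\mathscr H^\perp\oplus\bigl[\mathscr K^\perp\cap(\mathscr H+\mathscr K)\bigr]$, and exhibits the adjoint of $\Pi_1$ by hand as the oblique projection associated with that decomposition. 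You instead write the same idempotents in closed form as $\Pi_1=P_{\mathscr H}(P_{\mathscr H}+P_{\mathscr K})^{\dagger}$ and $\Pi_2=P_{\mathscr K}(P_{\mathscr H}+P_{\mathscr K})^{\dagger}$, where the Moore--Penrose inverse exists because Lemma~\ref{lem:ranges of P plus Q equals-1} gives $\mathscr R(P_{\mathscr H}+P_{\mathscr K})=\mathscr H+\mathscr K$ closed; adjointability is then automatic, and your uniqueness argument in $\mathscr E=\mathscr H\oplus\mathscr K\oplus(\mathscr H+\mathscr K)^\perp$ correctly identifies these operators as the desired oblique projections. What your approach buys is that the delicate point of the theorem disappears, and as a bonus your idempotents satisfy $\Pi_1+\Pi_2=P_{\mathscr H+\mathscr K}$, so by the paper's own Theorem~\ref{cor:the equivalent condition of pi1 add pi2 equals I} they coincide with the canonical pair $\Pi_{P,Q}$, $\Pi_{Q,P}$ of Lemma~\ref{lem:idempotents induced by complemented projections}; what the paper's approach buys is independence from the Moore--Penrose machinery (which it only introduces afterwards, in Proposition~\ref{thm:sth concerns M-P inverse}) and an explicit formula for $\Pi_1^*$. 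Your treatment of (ii)$\Rightarrow$(iii) and (iii)$\Rightarrow$(i) matches the paper's, with a slightly cleaner limit argument in the latter.
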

\begin{proof}
	(i)$\Longrightarrow $(ii). Suppose that $(\mathscr H,\mathscr K)$ is separated. Define $\Pi_i:\mathscr H+\mathscr K\to \mathscr E (i=1,2)$ by
	\begin{equation}\label{defn of Pi 1 2}
	\Pi_1(x+y)=x, \quad \Pi_2(x+y)=y\qquad (x\in\mathscr H, y\in \mathscr K)\,.
	\end{equation}
	Since $\mathscr H\cap \mathscr K=0$, it is observed that $\Pi_i (i=1,2)$ are well-defined. A simple use of the closed graph theorem shows that  $\Pi_1$ and $\Pi_2$ are both bounded.

	Let $\Pi_1$ be extended to a bounded linear operator on $\mathscr E$ (denoted still by $\Pi_1$) by putting $\Pi_1 z=0$ for all $z\in \big(\mathscr H+\mathscr K\big)^\perp$. Then
	\[
	\Pi_1^2\big(x+y+z\big)=\Pi_1\big(x\big)=x=\Pi_1(x+y+z)\,,
	\]
	where $x\in \mathscr H,y\in \mathscr K$ and $z\in \big(\mathscr H+\mathscr K\big)^\perp$. This shows that $\Pi_1^2=\Pi_1$, $\mathscr R(\Pi_1)=\mathscr H$ and $\mathscr K\subseteq\mathscr N(\Pi_1)$.
	
	Next, we show that $\Pi_1\in \mathcal{L}(\mathscr E)$. For this, we prove that
	$\mathscr E$ is the direct sum of $\mathscr H^\perp$ and $\mathscr K^\perp\cap (\mathscr H+\mathscr K)$
	
	Indeed, since $(\mathscr H,\mathscr K)$ is separated, by Lemma~\ref{lem:norm of two projections less than one} we have $\mathscr E=\mathscr H^\perp+\mathscr K^\perp$.
	So, for every $x\in \mathscr E$, there exist $x_1\in \mathscr H^\perp$ and $x_2\in \mathscr K^\perp$ such that $x=x_1+x_2$. Let $x_2=z+z'$, where $z\in \mathscr H+\mathscr K$ and $z'\in (\mathscr H+\mathscr K)^\perp $. Then $z=x_2-z'\in \mathscr K^\perp+(\mathscr H+\mathscr K)^\perp=\mathscr K^\perp$, hence $z\in \mathscr K^\perp\cap(\mathscr H+\mathscr K)$. It follows that
	\[
	x=x_1+x_2=(x_1+z')+z\in \mathscr H^\perp+\left[\mathscr K^\perp\cap(\mathscr K+\mathscr H\right].
	\]
	Hence,
	$$\mathscr E=\mathscr H^\perp+\big[\mathscr K^\perp\cap (\mathscr H+\mathscr K)\big].$$
	Furthermore, it is easy to verify
	$$\mathscr H^\perp\cap \big[\mathscr K^\perp\cap (\mathscr H+\mathscr K)\big]=0.$$
	
	Now, we define unambiguously an operator $D:\mathscr E\to \mathscr E$ by
	\[
	D(x+y)=y
	\]
	for all $x\in \mathscr H^\perp$ and $y\in\mathscr K^\perp\cap (\mathscr K+\mathscr H)$.  For every such $x$ and $y$, together with $h\in \mathscr H$,
	$k\in \mathscr K$ and $z\in(\mathscr H+\mathscr K)^\perp$, we have
	\begin{align*}
	\big\langle \Pi_1(h+k+z),x+y\big\rangle=&\langle h,x+y\rangle=\langle h,y\rangle=\langle h+k+z,y\rangle\\
	=&\big\langle h+k+z,D(x+y)\big\rangle.
	\end{align*}
	Therefore, $\Pi_1\in\mathcal{L}(\mathscr E)$ and $\Pi_1^*=D$.
	
	The operator $\Pi_2$, defined by \eqref{defn of Pi 1 2}, can be extended to an idempotent in $\mathcal{L}(\mathscr E)$ such that  $\Pi_1\Pi_2=\Pi_2\Pi_1=0$,
	$\mathscr R(\Pi_1)=\mathscr H$ and $\mathscr R(\Pi_2)=\mathscr K$.
	
	(ii)$\Longrightarrow $(iii) It can be derived immediately if we put $\Pi=\Pi_1$.

	(iii)$\Longrightarrow $(i). Let $\Pi\in \mathcal{L}(\mathscr E)$ be an idempotent satisfying $\mathscr R(\Pi)=\mathscr H$ and $\mathscr K\subseteq\mathscr N(\Pi)$.
	Then $\mathscr H\cap\mathscr K=0$, since
	\[
	\mathscr H\cap\mathscr K\subseteq \mathscr{R}(\Pi)\cap\mathscr{N}(\Pi)=0.\]
	Now let $\{x_n\}$ and $\{y_n\}$ be sequences in $\mathscr E$ such that
	\begin{equation}\label{limit1}
	\lim_{n\to\infty}(\Pi x_n+P_\mathscr K y_n)= z\,.
	\end{equation}
	Then
	\begin{equation}\label{limit12}
	\Pi x_n=\Pi(\Pi x_n+P_\mathscr K y_n)\to \Pi z.
	\end{equation}
	Employing \eqref{limit1} and \eqref{limit12}, we see that there is $y\in \mathscr E$ such that $\lim\limits_{n\to \infty}P_\mathscr K y_n=P_\mathscr K y$. Hence, $z=\Pi z+P_\mathscr K y$. This shows that $\mathscr H+\mathscr K$ is closed. Thus $(\mathscr H,\mathscr K)$ is a separated pair.
\end{proof}

\begin{corollary}\label{corollary abs} Let $\mathscr H$ and $\mathscr K$ be  orthogonally complemented closed submodules of $\mathscr E$. The following statements are equivalent:
	\begin{itemize}
		\item[{\rm (i)}] $(\mathscr H,\mathscr K)$ is a separated pair.
		\item[{\rm (ii)}] There exist  constants $\alpha_1,\alpha_2 >0$ such that
		\begin{align*}
		|x+y|\geq \alpha_1 |x|\mbox{~and~}	|x+y|\geq \alpha_2 |y| \quad (x\in \mathscr H,y\in \mathscr K).
		\end{align*}
		\item[{\rm (iii)}] There exist  constants  $\alpha_1,\alpha_2 >0$ such that
		\begin{equation}\label{separeted-norm absolute in HC}
		\|x+y\|\geq \alpha_1 \|x\|\mbox{~and~}	\|x+y\|\geq \alpha_2 \|y\| \quad (x\in \mathscr H,y\in \mathscr K).
		\end{equation}
	\end{itemize}
\end{corollary}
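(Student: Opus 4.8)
The plan is to run the cycle (i)$\Longrightarrow$(ii)$\Longrightarrow$(iii)$\Longrightarrow$(i). Note that (ii) is the strongest of the three statements (a $C^*$-algebra-valued estimate) and (iii) is its scalar shadow, so the substance lies in (i)$\Longrightarrow$(ii) and in (iii)$\Longrightarrow$(i), while (ii)$\Longrightarrow$(iii) is a routine ``apply the norm'' step.

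For (i)$\Longrightarrow$(ii): Theorem~\ref{th equivalent of separation} supplies idempotents $\Pi_1,\Pi_2\in\mathcal{L}(\mathscr E)$ with $\Pi_1\Pi_2=\Pi_2\Pi_1=0$, $\mathscr R(\Pi_1)=\mathscr H$ and $\mathscr R(\Pi_2)=\mathscr K$. From $\Pi_1^2=\Pi_1$ we get $\Pi_1 x=x$ for $x\in\mathscr H$, and from $\Pi_1\Pi_2=0$ we get $\Pi_1 y=0$ for $y\in\mathscr K$; hence $\Pi_1(x+y)=x$, and symmetrically $\Pi_2(x+y)=y$. Now apply the standard Hilbert-module estimate $\langle Tz,Tz\rangle\le\|T\|^2\langle z,z\rangle$ (a consequence of $T^*T\le\|T\|^2 I$ in the $C^*$-algebra $\mathcal{L}(\mathscr E)$) with $T=\Pi_1$ and $z=x+y$, obtaining $|x|^2\le(\|\Pi_1\|\,|x+y|)^2$ as positive elements of $\mathscr A$. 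The delicate point, which I expect to be the main obstacle, is to pass from this squared inequality to $|x|\le\|\Pi_1\|\,|x+y|$: one cannot cancel squares in a noncommutative $C^*$-algebra, but since $t\mapsto t^{1/2}$ is operator monotone on $[0,\infty)$, an inequality of positive elements does survive taking square roots. With $\alpha_1:=\|\Pi_1\|^{-1}$, and $\alpha_2:=\|\Pi_2\|^{-1}$ from the symmetric argument, this is precisely (ii); the degenerate cases $\mathscr H=\{0\}$ or $\mathscr K=\{0\}$ make the corresponding inequality vacuous.

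For (ii)$\Longrightarrow$(iii): apply the $C^*$-norm to each of the two inequalities, using that the norm is monotone on the positive cone of $\mathscr A$ and that $\|\,|u|\,\|=\|u\|$ for every $u\in\mathscr E$; this converts $|x+y|\ge\alpha_1|x|$ into $\|x+y\|\ge\alpha_1\|x\|$ with the same constants, and likewise for $\alpha_2$.

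For (iii)$\Longrightarrow$(i): the first inequality forces $\mathscr H\cap\mathscr K=0$, since for $z$ in the intersection we may take $x=z$, $y=-z$ to get $0=\|x+y\|\ge\alpha_1\|z\|$. To see that $\mathscr H+\mathscr K$ is closed, let $x_n+y_n\to w$ with $x_n\in\mathscr H$, $y_n\in\mathscr K$; then $\|x_n-x_m\|\le\alpha_1^{-1}\|(x_n+y_n)-(x_m+y_m)\|$ shows $\{x_n\}$ is Cauchy, hence converges to some $x\in\mathscr H$ ($\mathscr H$ being closed), and likewise $y_n\to y\in\mathscr K$, so $w=x+y\in\mathscr H+\mathscr K$. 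Since $\mathscr H$ and $\mathscr K$ are orthogonally complemented, Remark~\ref{rm separted} promotes ``$\mathscr H\cap\mathscr K=0$ and $\mathscr H+\mathscr K$ is closed'' to ``$(\mathscr H,\mathscr K)$ is a separated pair'', closing the cycle.
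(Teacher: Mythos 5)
Your proposal is correct and follows essentially the same route as the paper: it uses the idempotents $\Pi_1,\Pi_2$ from Theorem~\ref{th equivalent of separation} together with $\Pi_i^*\Pi_i\le\|\Pi_i\|^2 I$ and operator monotonicity of the square root for (i)$\Longrightarrow$(ii), applies the norm for (ii)$\Longrightarrow$(iii), and closes the cycle via the zero-intersection and Cauchy-sequence arguments plus Remark~\ref{rm separted}. Your explicit justification of the square-root step and your choice $\alpha_i=\|\Pi_i\|^{-1}$ (rather than the paper's $\alpha_i=\|\Pi_i\|$, which is a slip there) are in fact slightly more careful than the original.
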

\begin{proof} (i) $\Longrightarrow$(ii). For $i=1,2$, let $\Pi_i$ be as in the proof of the preceding theorem, and let  $\alpha_i=\|\Pi_i\|$. From \eqref{defn of Pi 1 2}, we see that for every $x\in\mathscr H$ and $y\in \mathscr K$,
	\begin{align*}\langle x,x\rangle=\big\langle \Pi_1 (x+y),\Pi_1(x+y)\big\rangle=\big\langle \Pi_1^*\Pi_1 (x+y),x+y\big\rangle\le \alpha_1^2\,\langle x+y,x+y\rangle.
	\end{align*}
	Hence,
	$$|x|=\langle x,x\rangle^\frac12\le \alpha_1 \langle x+y,x+y\rangle^\frac12=\alpha_1 |x+y|.$$
	Similarly, $|y|\le \alpha_2 |x+y|$.
	
	(ii)$\Longrightarrow$(iii). It follows from
	\begin{align*}a,b\in \mathscr A,a\geq b>0\Rightarrow \|a\|\geq\|b\|\,.
	\end{align*}

	(iii)$\Longrightarrow$(i). Suppose that \eqref{separeted-norm absolute in HC} is valid. It is easy to see that $\mathscr H+\mathscr K$ is closed and $\mathscr H\cap \mathscr K=0$. Hence by Remark \ref{rm separted}, $(\mathscr H,\mathscr K)$ is separated.
\end{proof}

Given two arbitrary idempotents $\Pi_1$ and $\Pi_2$ on a Hilbert space, it is shown in \cite{Du} that the invertibility of the linear combination $\lambda_1\Pi_1+\lambda_2 \Pi_2$ is
independent of the choice of $\lambda_i, i=1,2$, if $\lambda_1\lambda_2\neq 0$ and $\lambda_1+\lambda_2\neq 0$. Such  a result can be generalized as follows.

\begin{theorem}\label{lem closed range of idempotents}
	Let $(\mathscr H,\mathscr K)$ be a  separated pair of orthogonally complemented submodules of $\mathscr E$. Let $\Pi_1$ and $\Pi_2$ be idempotents  in $\mathcal L(\mathscr E)$ 
	such that $\mathscr R(\Pi_1)=\mathscr H$ and $\mathscr R(\Pi_2)=\mathscr K$. Then the following assertions are equivalent:
	\begin{itemize}
		\item[{\rm (i)}] $\mathscr R(\Pi_1+\lambda\Pi_2)$ is closed in $\mathscr{E}$ for every $\lambda\in \mathbb{C}$;
		\item[{\rm(ii)}] $\mathscr R(\Pi_1+\lambda\Pi_2)$ is closed in $\mathscr{E}$ for some $\lambda\in \mathbb{C}\setminus \{0\}$;
		\item[{\rm (iii)}] $\mathscr R(\Pi_1+\Pi_2)$ is closed in $\mathscr{E}$.
	\end{itemize}
\end{theorem}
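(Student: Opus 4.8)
The plan is as follows. Two of the implications require no work: (i)$\Rightarrow$(iii) is the instance $\lambda=1$, and (iii)$\Rightarrow$(ii) is the remark that $1\neq 0$. Moreover the value $\lambda=0$ never causes trouble, because $\mathscr R(\Pi_1+0\cdot\Pi_2)=\mathscr R(\Pi_1)=\mathscr H$ is orthogonally complemented, hence closed. So the whole theorem reduces to proving (ii)$\Rightarrow$(i) in the sharp form: \emph{if $\mathscr R(\Pi_1+\lambda_0\Pi_2)$ is closed for some $\lambda_0\in\mathbb C\setminus\{0\}$, then $\mathscr R(\Pi_1+\lambda\Pi_2)$ is closed for every $\lambda\in\mathbb C\setminus\{0\}$.}

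The core of the proof is an explicit, $\lambda$-free description of these ranges. First I would establish, for every $\lambda\ne0$,
\[
\mathscr R(\Pi_1+\lambda\Pi_2)=\bigl\{\,x+\lambda y:\ x\in\mathscr H,\ y\in\mathscr K,\ x-y\in\mathscr N(\Pi_1)+\mathscr N(\Pi_2)\,\bigr\}.
\]
For the inclusion ``$\subseteq$'', write a typical element as $(\Pi_1+\lambda\Pi_2)z$, put $x=\Pi_1z\in\mathscr H$ and $y=\Pi_2z\in\mathscr K$; idempotency gives $z-x=z-\Pi_1z\in\mathscr N(\Pi_1)$ and $z-y\in\mathscr N(\Pi_2)$, whence $x-y=(x-z)+(z-y)\in\mathscr N(\Pi_1)+\mathscr N(\Pi_2)$. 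For ``$\supseteq$'', if $x-y=n_1+n_2$ with $n_i\in\mathscr N(\Pi_i)$, then $z:=x-n_1=y+n_2$ satisfies $\Pi_1z=x$ and $\Pi_2z=y$ (using that $\Pi_1$ and $\Pi_2$ restrict to the identity on their ranges $\mathscr H$ and $\mathscr K$), so $(\Pi_1+\lambda\Pi_2)z=x+\lambda y$. In particular the index set $S:=\{(x,y)\in\mathscr H\times\mathscr K:\ x-y\in\mathscr N(\Pi_1)+\mathscr N(\Pi_2)\}$ does not depend on $\lambda$, and $\mathscr R(\Pi_1+\lambda\Pi_2)=\{x+\lambda y:(x,y)\in S\}$.

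Next I would transfer closedness between two nonzero parameters. Since $(\mathscr H,\mathscr K)$ is a separated pair, $\mathscr L:=\mathscr H+\mathscr K$ is a closed submodule of $\mathscr E$; and since $\mathscr H\cap\mathscr K=0$, every $w\in\mathscr L$ has a unique decomposition $w=x+y$ with $x\in\mathscr H$, $y\in\mathscr K$. Exactly as in the proof of Theorem~\ref{th equivalent of separation}, the closed graph theorem shows that the resulting maps $P_0\colon\mathscr L\to\mathscr H$ and $Q_0\colon\mathscr L\to\mathscr K$ are bounded. For nonzero $\lambda,\mu$, define $\Psi_{\lambda,\mu}\colon\mathscr L\to\mathscr L$ by $\Psi_{\lambda,\mu}(w)=P_0w+\tfrac{\mu}{\lambda}\,Q_0w$; it is a bounded linear bijection with bounded inverse $\Psi_{\mu,\lambda}$. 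Writing $w=x+\lambda y$ (so $P_0w=x$, $Q_0w=\lambda y$), we see $\Psi_{\lambda,\mu}$ carries $x+\lambda y$ to $x+\mu y$, hence $\Psi_{\lambda,\mu}\bigl(\mathscr R(\Pi_1+\lambda\Pi_2)\bigr)=\mathscr R(\Pi_1+\mu\Pi_2)$ by the previous paragraph. A homeomorphism of $\mathscr L$ preserves closedness, and $\mathscr L$ is closed in $\mathscr E$, so $\mathscr R(\Pi_1+\lambda\Pi_2)$ is closed in $\mathscr E$ if and only if $\mathscr R(\Pi_1+\mu\Pi_2)$ is. Together with the first paragraph, this yields all three equivalences.

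The only substantive step is the range identity; once it is available, the rest is soft (an open-mapping/closed-graph transfer). The points demanding care are the arithmetic with $\mathscr N(\Pi_1)+\mathscr N(\Pi_2)$ in that identity, and the use of \emph{separatedness} — not merely $\mathscr H\cap\mathscr K=0$ — to guarantee that $\mathscr L$ is closed and the oblique projections $P_0,Q_0$ are bounded, which is what makes each $\Psi_{\lambda,\mu}$ a homeomorphism. (Note that the theorem asserts only the $\lambda$-independence of closedness: whether $S$ is itself closed, equivalently whether $\mathscr R(\Pi_1+\Pi_2)$ is closed, genuinely depends on the choice of $\Pi_1,\Pi_2$, as the example announced in the introduction will demonstrate.)
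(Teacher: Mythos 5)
Your proof is correct, but it is organized quite differently from the paper's. The paper argues each implication by a direct sequential argument: for $x\in\overline{\mathscr R(\Pi_1+\Pi_2)}$ it picks $x_n$ with $(\Pi_1+\Pi_2)x_n\to x$, invokes the norm inequalities of Corollary~\ref{corollary abs} to split the convergence into $\Pi_1x_n\to\Pi_1z_1$ and $\Pi_2x_n\to\Pi_2z_2$, feeds the recombined limit $\Pi_1z_1+\lambda\Pi_2z_2$ into the assumed closed range $\mathscr R(\Pi_1+\lambda\Pi_2)$, and then uses $\mathscr H\cap\mathscr K=0$ to match components and conclude $x=(\Pi_1+\Pi_2)z_0$. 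You instead isolate a $\lambda$-free parametrization $\mathscr R(\Pi_1+\lambda\Pi_2)=\{x+\lambda y:(x,y)\in S\}$ and transport closedness via the bicontinuous bijection $\Psi_{\lambda,\mu}$ of $\mathscr H+\mathscr K$; the separatedness hypothesis enters only once, to make the oblique projections $P_0,Q_0$ bounded (equivalently, via Corollary~\ref{corollary abs} or the closed graph theorem as in Theorem~\ref{th equivalent of separation}). The two arguments rest on the same underlying fact, but yours is more structural and yields slightly more: all the ranges $\mathscr R(\Pi_1+\lambda\Pi_2)$, $\lambda\ne0$, are images of one another under linear homeomorphisms of the closed submodule $\mathscr H+\mathscr K$, whereas the paper's proof only certifies the equivalence of closedness; your range identity also makes explicit why the set $S$ is independent of $\lambda$, something the paper uses only implicitly when matching components. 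Both proofs are complete; I see no gap in yours.
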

\begin{proof}
	
	(i)$\Longrightarrow$(ii). It is clear.\\
	(ii)$\Longrightarrow$(iii). Suppose that $\mathscr R(\Pi_1+\lambda\Pi_2)$ is closed for some $\lambda\in \mathbb{C}\setminus\{0\}$. Let $x\in \overline{\mathscr R(\Pi_1+\Pi_2)}\subseteq \overline{\mathscr R(\Pi_1)+\mathscr R(\Pi_2)}=\mathscr R(\Pi_1)+\mathscr R(\Pi_2)$. Then there exist a sequence $\{x_n\}$ in $\mathscr E$ and $z_1,z_2\in \mathscr E$ such that
	\begin{equation}\label{limi1}
	x=\lim_{n\to \infty} (\Pi_1+\Pi_2)x_n=\Pi_1z_1+\Pi_2z_2\,.
	\end{equation}
	Since the pair  $(\mathscr R(\Pi_1), \mathscr R(\Pi_2))$ is separated, it follows from  Corollary \eqref{corollary abs} that
	\[
	\|\Pi_1(x_n-z_1)+\Pi_2( x_n-z_2)\|\geq \alpha\|\Pi_2( x_n-z_i)\|
	\]for some $\alpha>0$. Hence
	\begin{equation}\label{limi2}
	\lim_{n\to \infty} \Pi_1x_n=\Pi_1z_1\,,\qquad \lim_{n\to \infty}\Pi_2x_n=\Pi_2z_2\,,
	\end{equation}
	which gives
	\begin{align*}
	\lim_{n\to \infty} (\Pi_1+\lambda \Pi_2)x_n=\Pi_1 z_1+\lambda \Pi_2z_2.
	\end{align*}
	Consequently,
	\begin{equation}\label{reduce to z0-1}\Pi_1 z_1+\lambda \Pi_2z_2=(\Pi_1+\lambda \Pi_2)z_0\end{equation}
	for some $z_0\in \mathscr R(\Pi_1)$, since $\mathscr R(\Pi_1+\lambda\Pi_2)$ is assumed to be closed. Therefore,
	$$\Pi_1(z_1-z_0)=\Pi_2\big(\lambda(z_0-z_2)\big)\in \mathscr R(\Pi_1)\cap\mathscr R(\Pi_2)=0.$$
	Substituting $\Pi_1 z_1=\Pi_1 z_0$ in \eqref{reduce to z0-1} yields $\lambda \Pi_2z_2=\lambda \Pi_2z_0$, which in turn gives
	$\Pi_2z_2=\Pi_2z_0$, since $\lambda\ne 0$. It follows from \eqref{limi1} that $x=(\Pi_1+\Pi_2)z_0$.
	This shows the closedness of $\mathscr R(\Pi_1+\Pi_2)$.\\
	(iii)$\Longrightarrow$(i). The case of $\lambda=0$ is trivial.
	Suppose that $\mathscr R(\Pi_1+\Pi_2)$ is closed. Let $\lambda\in \mathbb{C}\setminus\{0\}$ be arbitrary. If $x\in \overline{\mathscr R(\Pi_1+\lambda\Pi_2)}\subseteq \mathscr R(\Pi_1)+\mathscr R(\Pi_2)$, then there exists a sequence $\{x_n\}$ in $\mathscr R(\Pi_2)$ such that
	\begin{equation}\label{lim1}
	\lim_{n\to \infty} (\Pi_1+\lambda\Pi_2)x_n=x=\Pi_1z_1+\Pi_2z_2
	\end{equation}
	for some $z_1,z_2\in \mathscr E$.
	Since $(\mathscr R(\Pi_1), \mathscr R(\Pi_2))$ is separated, by the same as reasoning in getting \eqref{limi2}, we have
	\begin{equation*}\label{lim2}
	\lim_{n\to \infty} \Pi_1x_n=\Pi_1z_1\quad {\rm and} \quad \lim_{n\to \infty}\lambda \Pi_2x_n=\Pi_2z_2\,.
	\end{equation*}
	As a result,
	$$\lim_{n\to \infty} (\Pi_1+\Pi_2)x_n=\Pi_1z_1+\Pi_2\left(\frac{z_2}{\lambda}\right)=(\Pi_1+\Pi_2)z_0$$
	for some $z_0\in \mathscr E$.
	Therefore, $\Pi_1 z_1=\Pi_1 z_0$ and $\Pi_2 z_2=\lambda\Pi_2(z_0)$, since $\mathscr{R}(\Pi_1)\cap\mathscr{R}(\Pi_2)=0$. It follows from
	\eqref{lim1} that
	$x=(\Pi_1+\lambda\Pi_2)(z_0)\in \mathscr R(\Pi_1+\lambda\Pi_2)$.  Hence, $\mathscr R(\Pi_1+\lambda\Pi_2)$ is closed.
\end{proof}

As an application of Theorem \ref{lem closed range of idempotents}, we introduce the formulas for the Moore-Penrose inverse associated with a separated pair.
We recall some basic knowledge about the Moore--Penrose inverse of an operator. Suppose that $T\in \mathcal{L}(\mathscr{H},\mathscr{K})$. The
\emph{Moore--Penrose inverse} of $T$, denoted by $T^\dag$, is the
unique element $X\in \mathcal{L}(\mathscr{K},\mathscr{H})$ satisfying
\begin{equation} \label{equ:m-p inverse} TXT=T,\quad XTX=X, \quad (TX)^*=TX, \quad\mbox{and}\quad (XT)^*=XT.\end{equation}
If such an operator $T^\dag$ exists, then $T$ is said to be \emph{Moore--Penrose invertible}. It is known that $T$ is Moore--Penrose invertible if and only if $\mathscr {R}(T)$ is closed in $\mathscr{K}$ \cite[Theorem~2.2]{Xu-Sheng}, and in this case, we have
\begin{equation}\label{equ:the range and null space of M-P inverse}\mathscr {R}(T^\dag)=\mathscr {R}(T^*)\quad{\rm and } \quad \mathscr {N}(T^\dag)=\mathscr {N}(T^*).\end{equation}
In the next result, we give a formula for the Moore--Penrose inverse of operators associated with a separated pair of submodules.
\begin{proposition}\label{thm:sth concerns M-P inverse}  Let $\Pi_1, \Pi_2\in \mathcal{L}(\mathscr{E})$ be idempotents satisfying $\Pi_1\Pi_2=\Pi_2\Pi_1=0$. Then
	\begin{equation}\label{equ:sth concerns M-P inverse}(\Pi_1+\lambda \Pi_2)^\dag=(\Pi_1+\Pi_2)^\dag \left(\Pi_1+\frac{1}{\lambda} \Pi_2\right)(\Pi_1+\Pi_2)^\dag\end{equation}
	for every $\lambda\in\mathbb{C}\setminus\{0\}$.
\end{proposition}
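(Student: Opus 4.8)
The plan is to set $X:=(\Pi_1+\Pi_2)^\dag\bigl(\Pi_1+\tfrac1\lambda\Pi_2\bigr)(\Pi_1+\Pi_2)^\dag$, show that $X$ fulfils the four defining equations \eqref{equ:m-p inverse} for the Moore--Penrose inverse of $\Pi_1+\lambda\Pi_2$, and then conclude by uniqueness. It is convenient to write $S_\mu:=\Pi_1+\mu\Pi_2$ for $\mu\in\mathbb C$, so that $T:=S_1=\Pi_1+\Pi_2$, $S_0=\Pi_1$, and $X=T^\dag S_{1/\lambda}T^\dag$; the assertion then reads $S_\lambda^\dag=X$.

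The first step is purely algebraic. From $\Pi_i^2=\Pi_i$ and $\Pi_1\Pi_2=\Pi_2\Pi_1=0$ one computes at once that $S_\mu S_\nu=\Pi_1+\mu\nu\,\Pi_2=S_{\mu\nu}$ for all $\mu,\nu\in\mathbb C$. In particular $T=S_1$ is an idempotent, hence $T\in\mathcal L(\mathscr E)$ has closed range, so $T^\dag$ exists and $TT^\dag$, $T^\dag T$ are self-adjoint idempotents; also $TS_\mu=S_\mu T=S_\mu$ for every $\mu$, and $S_\lambda S_{1/\lambda}=S_{1/\lambda}S_\lambda=T$ when $\lambda\neq0$. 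Moreover $S_\lambda$ restricts to the identity on $\mathscr R(\Pi_1)$ and to multiplication by $\lambda$ on $\mathscr R(\Pi_2)$, so $\mathscr R(\Pi_1+\lambda\Pi_2)=\mathscr R(\Pi_1)+\mathscr R(\Pi_2)=\mathscr R(T)$, which is closed; therefore $(\Pi_1+\lambda\Pi_2)^\dag$ exists for every $\lambda$ and both sides of \eqref{equ:sth concerns M-P inverse} make sense. (This closedness is also what Theorems~\ref{th equivalent of separation} and~\ref{lem closed range of idempotents} yield, applied to the separated pair $(\mathscr R(\Pi_1),\mathscr R(\Pi_2))$.)

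The key reductions are the two \emph{absorption identities}
\[
TT^\dag S_\mu=S_\mu,\qquad S_\mu\,T^\dag T=S_\mu\qquad(\mu\in\mathbb C),
\]
the first because $S_\mu=TS_\mu$ and $TT^\dag T=T$, the second because $I-T^\dag T$ has range $\mathscr N(T^\dag T)=\mathscr N(T)$ while $\mathscr N(T)\subseteq\mathscr N(S_\mu)$ (as $Tz=0$ forces $S_\mu z=S_\mu(Tz)=0$), so that $S_\mu(I-T^\dag T)=0$. Using $TS_{1/\lambda}=S_{1/\lambda}$ and $TS_\lambda=S_\lambda$ to insert a factor $T$ and then absorbing it, one obtains the two central evaluations
\[
S_\lambda X=S_\lambda T^\dag S_{1/\lambda}T^\dag=S_\lambda\,(T^\dag T)\,S_{1/\lambda}T^\dag=S_\lambda S_{1/\lambda}T^\dag=TT^\dag,
\]
\[
X S_\lambda=T^\dag S_{1/\lambda}T^\dag S_\lambda=T^\dag S_{1/\lambda}\,(T^\dag T)\,S_\lambda=T^\dag S_{1/\lambda}S_\lambda=T^\dag T.
\]

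From here the four relations \eqref{equ:m-p inverse} are immediate: $(S_\lambda X)^*=(TT^\dag)^*=TT^\dag=S_\lambda X$ and $(XS_\lambda)^*=(T^\dag T)^*=T^\dag T=XS_\lambda$ by the Moore--Penrose relations for $T$; $S_\lambda XS_\lambda=(TT^\dag)S_\lambda=S_\lambda$ by the first absorption identity; and $XS_\lambda X=(T^\dag T)X=T^\dag T\,T^\dag S_{1/\lambda}T^\dag=T^\dag S_{1/\lambda}T^\dag=X$ via $T^\dag T T^\dag=T^\dag$. Uniqueness of the Moore--Penrose inverse then gives $X=(\Pi_1+\lambda\Pi_2)^\dag$, which is \eqref{equ:sth concerns M-P inverse}. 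I expect the only real obstacle to be the preliminary bookkeeping: checking that $(\Pi_1+\lambda\Pi_2)^\dag$ exists (closedness of $\mathscr R(\Pi_1+\lambda\Pi_2)$) and pinning down that $I-T^\dag T$ is the orthogonal projection onto $\mathscr N(T)$, which is exactly what drives the second absorption identity; everything afterwards is a formal manipulation in $\mathcal L(\mathscr E)$.
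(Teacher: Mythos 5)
Your proposal is correct and follows essentially the same route as the paper: both set $X=(\Pi_1+\Pi_2)^\dag\left(\Pi_1+\frac{1}{\lambda}\Pi_2\right)(\Pi_1+\Pi_2)^\dag$, compute $(\Pi_1+\lambda\Pi_2)X=(\Pi_1+\Pi_2)(\Pi_1+\Pi_2)^\dag$ and $X(\Pi_1+\lambda\Pi_2)=(\Pi_1+\Pi_2)^\dag(\Pi_1+\Pi_2)$, and conclude by verifying the four Penrose equations and invoking uniqueness. Your semigroup identity $S_\mu S_\nu=S_{\mu\nu}$ simply streamlines the paper's bookkeeping, which instead uses $\Pi_i(\Pi_1+\Pi_2)^\dag\Pi_i=\Pi_i$ and an explicit coefficient cancellation.
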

\begin{proof}
	Due to
	$\Pi_1\Pi_2=\Pi_2\Pi_1=0$, we have
	\begin{align*}&(\Pi_1+\Pi_2)\Pi_1=\Pi_1,\quad (\Pi_1+\Pi_2)\Pi_2=\Pi_2, \\
	&(\Pi_1+\Pi_2)^*\Pi_1^*=\Pi_1^*, \quad (\Pi_1+\Pi_2)^*\Pi_2^*=\Pi_2^*.
	\end{align*}
	It follows that $\mathscr R(\Pi_1+\Pi_2)=\mathscr R(\Pi_1)+\mathscr R(\Pi_2)$ (hence $\mathscr R(\Pi_1+\Pi_2)$ is closed), and
	for $i=1,2$, \begin{align}\label{parallel-01}&(\Pi_1+\Pi_2)(\Pi_1+\Pi_2)^\dag \Pi_i=\Pi_i\quad\mbox{and}\quad \Pi_i(\Pi_1+\Pi_2)^\dag (\Pi_1+\Pi_2)=\Pi_i,
	\end{align}
	which yield
	$$\Pi_i=\Pi_i\cdot \Pi_i=\Pi_i (\Pi_1+\Pi_2)(\Pi_1+\Pi_2)^\dag \Pi_i=\Pi_i(\Pi_1+\Pi_2)^\dag \Pi_i.$$
	Hence
	\begin{equation}\label{eqPi2}
	\Pi_1(\Pi_1+\Pi_2)^\dag\Pi_1=\Pi_1, \quad \Pi_2(\Pi_1+\Pi_2)^\dag \Pi_2=\Pi_2.
	\end{equation}
	It follows from Theorem \ref{lem closed range of idempotents} that $(\Pi_1+\lambda \Pi_2)^\dag$ exists.

	Now let
	\begin{equation*}\label{equ:definition ox X-1}X=(\Pi_1+\Pi_2)^\dag \left(\Pi_1+\frac{1}{\lambda} \Pi_2\right)(\Pi_1+\Pi_2)^\dag.\end{equation*}
	For simplicity, we put
	$$\widetilde{P}=(\Pi_1+\Pi_2)(\Pi_1+\Pi_2)^\dag,\quad \widetilde{Q}=(\Pi_1+\Pi_2)^\dag(\Pi_1+\Pi_2).$$
	Then both $\widetilde{P}$ and $\widetilde{Q}$ are projections. In view of \eqref{parallel-01} and \eqref{eqPi2}, we have
	\begin{align*}
	(\Pi_1+\lambda \Pi_2)X&=\big[(\Pi_1+\Pi_2)+(\lambda-1)\Pi_2 \big] (\Pi_1+\Pi_2)^\dag \\
	&\qquad \cdot \left[(\Pi_1+\Pi_2)+\frac{1-\lambda}{\lambda} \Pi_2\right](\Pi_1+\Pi_2)^\dag\\
	&=\left[\widetilde{P}+(\lambda-1)\Pi_2 (\Pi_1+\Pi_2)^\dag \right] \left[\widetilde{P}+\frac{1-\lambda}{\lambda} \Pi_2 (\Pi_1+\Pi_2)^\dag\right]\\
	&=\widetilde{P}+\left[\lambda-1+\frac{1-\lambda}{\lambda}-\frac{(1-\lambda)^2}{\lambda}\right]\Pi_2(\Pi_1+\Pi_2)^\dag=\widetilde{P}.
	\end{align*}
	Therefore,
	$(\Pi_1+\lambda \Pi_2)X$ is self-adjoint and
	$$(\Pi_1+\lambda \Pi_2)X(\Pi_1+\lambda \Pi_2)=\widetilde{P}\Pi_1+\lambda \widetilde{P}\Pi_2=\Pi_1+\lambda \Pi_2.$$
	Similarly,
	\begin{align*}
	X(\Pi_1+\lambda \Pi_2)=\widetilde{Q},
	\end{align*}
	$X(\Pi_1+\lambda \Pi_2)$ is self-adjoint, and
	$X(\Pi_1+\lambda \Pi_2)X=\widetilde{Q}X=X$. Thus, the four conditions stated in \eqref{equ:m-p inverse} are satisfied for $\Pi_1+\lambda \Pi_2$ and $X$.
\end{proof}

\begin{remark} The group inverse case of formula \eqref{equ:sth concerns M-P inverse} was stated in \cite[Theorem~3.8]{Ando} under the condition that  $\Pi_1=P_{\mathscr{R}(F)}$ and $\Pi_2=P_{\mathscr{N}(F)}$, where $F$ is an idempotent on a Hilbert space.
\end{remark}

The following example shows that there are idempotents $\Pi_1$ and $\Pi_2$ such that the pair $\big(\mathscr R(\Pi_1),\mathscr R(\Pi_2)\big)$ is separated but $\mathscr R(\Pi_1+\Pi_2)$ is not closed.
\begin{example}Let $\mathcal K$ be a separable Hilbert space and let $\{e_i:i\in\mathbb{N}\}$ be its usual orthonormal basis. Let $U$ be the unilateral shift given by
	$Ue_i=e_{i+1}$ for $i\in\mathbb{N}$. Define $T\in \mathbb{B}(\mathcal K)$ by
	\[
	Te_i=\frac{2}{i}e_i\qquad(i\geq 1)\,.
	\]
	Put $\mathcal H=\mathcal K\oplus \mathcal K$ and set
	$$\Pi_1=\left(
	\begin{array}{cc}
	I & -T \\
	0 & 0 \\
	\end{array}
	\right),\quad \Pi_2=\left(
	\begin{array}{cc}
	I & 0 \\
	U & 0 \\
	\end{array}
	\right).$$
	A simple computation shows that both $\Pi_1$ and $\Pi_2$ are idempotents in  $\mathbb{B}(\mathcal{H})$ such that $\mathscr R(\Pi_1)\cap \mathscr R(\Pi_2)=0$. Note that $\mathscr{R}(\Pi_1)=\overline{\mathscr {R}(\Pi_1)}=\overline{\mathscr{R}(\Pi_1\Pi_1^*)}=\mathcal K\oplus 0$ and that $\mathscr{R}(\Pi_2)=\overline{\{x\oplus Ux:x\in \mathcal K\}}$, so we have
	$$\mathscr{R}(\Pi_1)+\mathscr{R}(\Pi_2)=\mathcal{K}\oplus \mathscr{R}(U),$$
	which is obviously closed in  $\mathcal{H}$. Thus, $\big(\mathscr{R}(\Pi_1),\mathscr{R}(\Pi_2)\big)$ is a separated pair of subspaces in $\mathcal H$.
	
	We claim that $\mathscr R(\Pi_1+\Pi_2)$ is not closed. To see this, let
	$$x_n=\sum\limits_{i=1}^n\frac{1}{i}e_i,\quad y_n=\sum\limits_{i=1}^n e_i$$ for each $n\in\mathbb{N}$.
	Then
	\[
	(\Pi_1+\Pi_2)(x_n\oplus y_n)=0\oplus Ux_n\to 0\oplus \sum_{i=1}^\infty\frac{1}{i}e_{i+1}:= 0\oplus \xi\,.
	\]
	We claim that $0\oplus\xi\not\in \mathscr{ R}(\Pi_1+\Pi_2)$. In fact, if there exist $x=\sum_{i=1}^\infty\alpha_ie_i,y=\sum_{i=1}^\infty\beta_ie_i\in \mathcal K$ such that $(\Pi_1+\Pi_2)(x\oplus y)=0\oplus\xi$, then
	$$2x-Ty=0,\quad Ux=\sum_{i=1}^\infty\alpha_ie_{i+1}=\sum_{i=1}^\infty\frac{1}{i}e_{i+1}.$$
	It follows that $\alpha_i=\frac{1}{i}$ and
	\begin{align*}\label{eqcount}\sum_{i=1}^\infty\frac{2\beta_i}{i}e_i= Ty=2x=\sum_{i=1}^\infty\frac{2}{i}e_{i}.\end{align*}
	Hence $\beta_i=1$ for all $i\in\mathbb{N}$, which is a contradiction since $\|y\|^2=\sum_{i=1}^\infty |\beta_i|^2<\infty$. Thus, $\mathscr R(\Pi_1+\Pi_2)$ is not closed.
\end{example}

The following example shows that the separated condition in Theorem  \ref{lem closed range of idempotents} is necessary.
\begin{example}
Let $\mathscr A=C[0,1]$ be the $C^*$-algebra of all continuous complex-valued functions on $[0,1]$. Let $\mathscr E=\mathscr A\oplus \mathscr A$. Let $g\in \mathscr A$ be defined by $g(\lambda)=\lambda$ for all $\lambda\in [0,1]$. Let $T$ and $S$ be an idempotent operators in $\mathcal{L}(\mathscr E)$ defined as follows:
\[
T\begin{pmatrix}
f_1\\
f_2
\end{pmatrix}=\begin{pmatrix}
f_1\\
gf_1
\end{pmatrix}\quad\mbox{and}\quad  S\begin{pmatrix}
f_1\\
f_2
\end{pmatrix}=\begin{pmatrix}
f_1\\
-gf_1
\end{pmatrix}\,.
\]  
It is easy to verify that $\mathscr R(S)\cap \mathscr R(T)=\{0\}$. If we define $f_n\in \mathscr A$ by 
\[
f_n(\lambda)=\begin{cases}
\frac{1}{\lambda}\,,&\frac{1}{n}\leq\lambda\leq 1\,,\\
n\,,&\lambda\leq \frac{1}{n}\,,
\end{cases}
\] 
then $(T-S)\begin{pmatrix}
f_n\\
0
\end{pmatrix}=T\begin{pmatrix}
f_n\\
0
\end{pmatrix}+S\begin{pmatrix}
-f_n\\
0
\end{pmatrix}$ tends to $\begin{pmatrix}
0\\
2
\end{pmatrix}\not \in \mathscr R(T)+\mathscr R(S)\supseteq\mathscr R(T-S)$. This shows that  $(\mathscr R(T),\mathscr R(S))$ is not a separated pair of submodules and $\mathscr R(T-S)$ is not closed. It is easy to see that $\mathscr R(T+S)= \mathscr A\oplus 0$.
\end{example}

Let $\Pi\in \mathcal{L}(\mathscr{E})$ be an idempotent, and let $P$ and $Q$ be projections  from $\mathscr{E}$ onto $\mathscr {R}(\Pi)$ and $\mathscr {N}(\Pi)$, respectively. By Lemma~\ref{lem:norm of two projections less than one} we have $\|PQ\|<1$, and it is shown  in \cite[Theorem~1.3]{Koliha} that
\[
\Pi=(I-PQ)^{-1}P(I-PQ)\,.
\]

Inspired by the above observation, we give an alternative description of the idempotents $\Pi_1$ and $\Pi_2$ constructed in the proof of Theorem~\ref{th equivalent of separation}.

\begin{lemma}\label{lem:idempotents induced by complemented projections} Let $(\mathscr H,\mathscr K)$ be a separated pair of orthogonally complemented closed submodules of $\mathscr E$.  Let
	\begin{equation}\label{equ:idempotents induced by complemented projections}\Pi_{P,Q}=(I-PQ)^{-1}P(I-PQ)\quad \mbox{and}\quad \Pi_{Q,P}=(I-QP)^{-1}Q(I-QP)\,,\end{equation}
	where $P$ and $Q$ are the projections from $\mathscr{E}$ on $\mathscr H$ and $\mathscr K$, respectively. Then $\Pi_{P,Q}$ and $\Pi_{Q,P}$ are idempotents such that
	\begin{align}\label{ali:induced range and null space-1}&\mathscr {R}(\Pi_{P,Q})=\mathscr {R}(P), \quad \mathscr{N}(\Pi_{P,Q})=\mathscr{R}(Q)+\mathscr{N}(P)\cap\mathscr {N}(Q),\\
	\label{ali:induced range and null space-2}&\mathscr {R}(\Pi_{Q,P})=\mathscr {R}(Q), \quad \mathscr {N}(\Pi_{Q,P})=\mathscr {R}(P)+\mathscr {N}(P)\cap\mathscr {N}(Q).
	\end{align}
\end{lemma}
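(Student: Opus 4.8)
The plan is to first make sense of the objects in \eqref{equ:idempotents induced by complemented projections}. Since $(\mathscr H,\mathscr K)$ is a separated pair of orthogonally complemented submodules, $\mathscr R(P)\cap\mathscr R(Q)=\mathscr H\cap\mathscr K=0$ and $\mathscr R(P)+\mathscr R(Q)=\mathscr H+\mathscr K$ is closed, so Lemma~\ref{lem:norm of two projections less than one} gives $\|PQ\|=\|QP\|<1$. Hence $I-PQ$ and $I-QP$ are invertible in $\mathcal L(\mathscr E)$, their inverses are again adjointable, and $\Pi_{P,Q},\Pi_{Q,P}\in\mathcal L(\mathscr E)$. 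By the symmetry $\Pi_{Q,P}=\Pi_{P',Q'}$ with $P'=Q$, $Q'=P$, it suffices to prove \eqref{ali:induced range and null space-1}; then \eqref{ali:induced range and null space-2} follows by interchanging $P$ and $Q$.

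Idempotency is immediate from $P^2=P$: the middle factor $(I-PQ)(I-PQ)^{-1}$ cancels, leaving
\[
\Pi_{P,Q}^{2}=(I-PQ)^{-1}P^{2}(I-PQ)=\Pi_{P,Q}.
\]
For the range, I would observe that for every $x\in\mathscr R(P)$ one has $Px=x$, hence $\Pi_{P,Q}x=(I-PQ)^{-1}P(I-PQ)x=(I-PQ)^{-1}(I-PQ)x=x$; thus $\mathscr R(P)$ lies in the fixed-point set of the idempotent $\Pi_{P,Q}$, which equals $\mathscr R(\Pi_{P,Q})$. Conversely, if $\Pi_{P,Q}z=z$ then $(I-PQ)z=P(I-PQ)z\in\mathscr R(P)$, and since $PQz\in\mathscr R(P)$ too, $z=(I-PQ)z+PQz\in\mathscr R(P)$. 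This gives $\mathscr R(\Pi_{P,Q})=\mathscr R(P)$.

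The nullspace part hinges on the single algebraic identity $P(I-PQ)z=Pz-P^{2}Qz=P(I-Q)z$. Because $I-PQ$ is invertible, $\Pi_{P,Q}z=0$ if and only if $P(I-PQ)z=0$, i.e.\ $P(I-Q)z=0$, i.e.\ $(I-Q)z\in\mathscr N(P)$; since always $(I-Q)z\in\mathscr N(Q)$, every $z\in\mathscr N(\Pi_{P,Q})$ decomposes as $z=Qz+(I-Q)z$ with $Qz\in\mathscr R(Q)$ and $(I-Q)z\in\mathscr N(P)\cap\mathscr N(Q)$, so $\mathscr N(\Pi_{P,Q})\subseteq\mathscr R(Q)+\mathscr N(P)\cap\mathscr N(Q)$. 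For the reverse inclusion one checks directly that $P(I-Q)y=0$ both when $y\in\mathscr R(Q)$ (then $Qy=y$) and when $y\in\mathscr N(P)$, so each of $\mathscr R(Q)$ and $\mathscr N(P)\cap\mathscr N(Q)$ is contained in $\mathscr N(\Pi_{P,Q})$. This yields \eqref{ali:induced range and null space-1}. I expect the only genuinely delicate point to be the opening reduction — namely invoking Lemma~\ref{lem:norm of two projections less than one} to secure $\|PQ\|<1$ and thereby the invertibility of $I-PQ$ (and the adjointability of its inverse), which is exactly where the separated hypothesis enters; once that is in place, the rest is a short manipulation with the idempotent relations $P^{2}=P$, $Q^{2}=Q$ and presents no real obstacle.
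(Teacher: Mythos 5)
Your proof is correct and follows essentially the same route as the paper's: invertibility of $I-PQ$ secured by Lemma~\ref{lem:norm of two projections less than one}, the computation $\Pi_{P,Q}x=x$ for $x\in\mathscr R(P)$ (the paper writes this as $\Pi_{P,Q}P=P$, and gets the reverse inclusion from $(I-PQ)^{-1}P=P(I-QP)^{-1}$ rather than your fixed-point decomposition $z=(I-PQ)z+PQz$, but both are one-line algebra), and the identity $P(I-PQ)=P(I-Q)$ for the nullspace. The only nit is the phrase ``when $y\in\mathscr N(P)$'' at the end: for $y\in\mathscr N(P)$ alone, $P(I-Q)y=-PQy$ need not vanish; you need $y\in\mathscr N(P)\cap\mathscr N(Q)$, which is of course the summand actually under consideration, so nothing is lost.
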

\begin{proof} It follows from Lemma~\ref{lem:norm of two projections less than one} that
	$$\|QP\|=\|PQ\|<1,$$
	so $I-PQ$ and $I-QP$ are both invertible. Evidently, both $\Pi_{P,Q}$ and $\Pi_{Q,P}$ are idempotents.
	
	It is obvious that $(I-PQ)^{-1}P=P(I-QP)^{-1}$,
	which gives 	$\mathscr {R}(\Pi_{P,Q})\subseteq \mathscr {R}(P)$. Also, by \eqref{equ:idempotents induced by complemented projections}, we have
	\begin{align}\label{eqPiQ}
	\Pi_{P,Q} P=(I-PQ)^{-1}(P-PQP)=(I-PQ)^{-1}(I-PQ)P=P,
	\end{align}
	and therefore $\mathscr {R}(P)\subseteq\mathscr {R}(\Pi_{P,Q})$. This shows that $\mathscr {R}(\Pi_{P,Q})=\mathscr {R}(P)$.
	
	The idempotent $\Pi_{P,Q}$ can be rewritten as
	$$\Pi_{P,Q}=(I-PQ)^{-1}P(I-Q),$$
	which gives $\mathscr {R}(Q)\subseteq \mathscr {N}(\Pi_{P,Q})$. Furthermore, the equation above indicates that for every $u\in\mathscr {N}(Q)$, we have
	$u\in \mathscr {N}(\Pi_{P,Q})$ if and only if $u\in\mathscr {N}(P)$. This completes the proof of the second equality in \eqref{ali:induced range and null space-1}. Clearly, the equations in \eqref{ali:induced range and null space-2} can be derived directly from \eqref{ali:induced range and null space-1} by exchanging $P$ with $Q$.
\end{proof}


\begin{lemma}\label{lem:an auxiliary lemma-1}{\rm \cite[Lemma~3.1]{QXZ}} Let $\Pi_1,\Pi_2\in\mathcal{L}(\mathscr{E})$ be idempotents. If $\mathscr{R}(\Pi_2)\subseteq \mathscr{R}(\Pi_1)$ and
	$\mathscr{N}(\Pi_2)\subseteq \mathscr{N}(\Pi_1)$, then $\Pi_1=\Pi_2$.
\end{lemma}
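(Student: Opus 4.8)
The plan is to convert the two hypotheses, which are inclusions of submodules, into two algebraic identities relating $\Pi_1$ and $\Pi_2$, namely $\Pi_1\Pi_2=\Pi_2$ and $\Pi_1\Pi_2=\Pi_1$, and then simply compare them. The whole argument is purely algebraic; adjointability plays no role.

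First I would record the elementary facts about any idempotent $\Pi\in\mathcal L(\mathscr E)$ that follow at once from $\Pi^2=\Pi$: every $y\in\mathscr R(\Pi)$ is fixed, i.e.\ $\Pi y=y$; and $\mathscr R(I-\Pi)=\mathscr N(\Pi)$, since $\Pi(I-\Pi)=0$ gives the inclusion $\mathscr R(I-\Pi)\subseteq\mathscr N(\Pi)$ while $x=(I-\Pi)x$ for $x\in\mathscr N(\Pi)$ gives the reverse. Then, using $\mathscr R(\Pi_2)\subseteq\mathscr R(\Pi_1)$: for every $x\in\mathscr E$ the element $\Pi_2 x$ lies in $\mathscr R(\Pi_2)\subseteq\mathscr R(\Pi_1)$, hence is fixed by $\Pi_1$, so $\Pi_1\Pi_2 x=\Pi_2 x$; as $x$ is arbitrary, $\Pi_1\Pi_2=\Pi_2$. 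Similarly, using $\mathscr N(\Pi_2)\subseteq\mathscr N(\Pi_1)$: for every $x\in\mathscr E$ the element $(I-\Pi_2)x$ lies in $\mathscr R(I-\Pi_2)=\mathscr N(\Pi_2)\subseteq\mathscr N(\Pi_1)$, so $\Pi_1(I-\Pi_2)x=0$; as $x$ is arbitrary, $\Pi_1=\Pi_1\Pi_2$.

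Combining the two identities yields $\Pi_1=\Pi_1\Pi_2=\Pi_2$, which is the assertion. I do not expect any genuine obstacle here; the only point requiring a moment's care is the bookkeeping of the identity $\mathscr R(I-\Pi_2)=\mathscr N(\Pi_2)$, which is what licenses the claim that $(I-\Pi_2)x$ lies in $\mathscr N(\Pi_2)$ and hence in $\mathscr N(\Pi_1)$. Everything else is a one-line manipulation.
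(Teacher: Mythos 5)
Your argument is correct and complete: the two inclusions translate into the identities $\Pi_1\Pi_2=\Pi_2$ (idempotents fix their ranges) and $\Pi_1\Pi_2=\Pi_1$ (via $\mathscr R(I-\Pi_2)=\mathscr N(\Pi_2)\subseteq\mathscr N(\Pi_1)$), and comparing them gives $\Pi_1=\Pi_2$. The paper itself gives no proof here --- it simply cites \cite[Lemma~3.1]{QXZ} --- so there is nothing to compare against; your purely algebraic derivation is the standard one and would serve as a self-contained proof.
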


\begin{theorem}\label{cor:the equivalent condition of pi1 add pi2 equals I} Suppose that $P, Q\in \mathcal{L}(\mathscr {E})$
	are projections such that $\big(\mathscr{R}(P),\mathscr{R}(Q)\big)$ is separated. Let $\Pi_1$ and $\Pi_2$ be idempotents such that
	$\mathscr{R}(\Pi_1)=\mathscr{R}(P)$ and $\mathscr{R}(\Pi_2)=\mathscr R(Q)$, respectively. Let $\widetilde{P}$ be the projection from $\mathscr{E}$ onto $\mathscr R(P)+\mathscr R(Q)$. Then the following statements are equivalent:
	\begin{enumerate}
		\item[{\rm (i)}] $\Pi_1+\Pi_2=\widetilde{P}$;
		\item[{\rm (ii)}] $\Pi_{P,Q}=\Pi_1$ and $\Pi_{Q,P}=\Pi_2$.
	\end{enumerate}
\end{theorem}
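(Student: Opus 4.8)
The plan is to prove both implications by reducing each to the uniqueness principle of Lemma~\ref{lem:an auxiliary lemma-1}: an adjointable idempotent is completely determined by its range and its null space. First I would record two facts to be used repeatedly. Since $P$ and $Q$ are orthogonal projections and $\mathscr R(P)+\mathscr R(Q)$ is closed (by separatedness), the projection $\widetilde P$ onto it satisfies $\mathscr N(\widetilde P)=\big(\mathscr R(P)+\mathscr R(Q)\big)^\perp=\mathscr R(P)^\perp\cap\mathscr R(Q)^\perp=\mathscr N(P)\cap\mathscr N(Q)$. Second, the range and null-space descriptions of $\Pi_{P,Q}$ and $\Pi_{Q,P}$ from Lemma~\ref{lem:idempotents induced by complemented projections} will be in force throughout; in particular $\mathscr R(\Pi_{P,Q})=\mathscr R(P)$, $\mathscr N(\Pi_{P,Q})=\mathscr R(Q)+\mathscr N(P)\cap\mathscr N(Q)$, and symmetrically for $\Pi_{Q,P}$.

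For (i)$\Longrightarrow$(ii), I would start from $(\Pi_1+\Pi_2)^2=\Pi_1+\Pi_2$ together with $\Pi_i^2=\Pi_i$ to get $\Pi_1\Pi_2+\Pi_2\Pi_1=0$; left- and right-multiplying this identity by $\Pi_1$ and comparing forces $\Pi_1\Pi_2=\Pi_2\Pi_1=0$. Hence $\mathscr R(Q)=\mathscr R(\Pi_2)\subseteq\mathscr N(\Pi_1)$, and if $\widetilde P x=0$ then $\Pi_1 x=-\Pi_2 x$, so applying $\Pi_1$ gives $\Pi_1 x=0$; thus $\mathscr N(\widetilde P)=\mathscr N(P)\cap\mathscr N(Q)\subseteq\mathscr N(\Pi_1)$ as well. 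Therefore $\mathscr N(\Pi_{P,Q})\subseteq\mathscr N(\Pi_1)$ while $\mathscr R(\Pi_{P,Q})=\mathscr R(P)=\mathscr R(\Pi_1)$, and Lemma~\ref{lem:an auxiliary lemma-1} yields $\Pi_1=\Pi_{P,Q}$; the mirror-image argument gives $\Pi_2=\Pi_{Q,P}$.

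For (ii)$\Longrightarrow$(i), assuming $\Pi_1=\Pi_{P,Q}$ and $\Pi_2=\Pi_{Q,P}$, the inclusions $\mathscr R(\Pi_2)=\mathscr R(Q)\subseteq\mathscr N(\Pi_1)$ and $\mathscr R(\Pi_1)=\mathscr R(P)\subseteq\mathscr N(\Pi_2)$ give $\Pi_1\Pi_2=\Pi_2\Pi_1=0$, so $\Pi:=\Pi_1+\Pi_2$ is an adjointable idempotent. As in the proof of Proposition~\ref{thm:sth concerns M-P inverse}, $\mathscr R(\Pi)=\mathscr R(\Pi_1)+\mathscr R(\Pi_2)=\mathscr R(P)+\mathscr R(Q)=\mathscr R(\widetilde P)$, and applying $\Pi_i$ to $\Pi x=0$ shows $\mathscr N(\Pi)=\mathscr N(\Pi_1)\cap\mathscr N(\Pi_2)$. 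The one genuine computation I foresee is the subspace identity
\[
\big[\mathscr R(Q)+\mathscr N(P)\cap\mathscr N(Q)\big]\cap\big[\mathscr R(P)+\mathscr N(P)\cap\mathscr N(Q)\big]=\mathscr N(P)\cap\mathscr N(Q);
\]
to prove ``$\subseteq$'' I would take an element and write it as $q+w_1=p+w_2$ with $q\in\mathscr R(Q)$, $p\in\mathscr R(P)$ and $w_1,w_2\in\mathscr N(P)\cap\mathscr N(Q)$, observe that $q-p=w_2-w_1$ lies both in $\mathscr R(P)+\mathscr R(Q)$ and in its orthogonal complement $\mathscr N(P)\cap\mathscr N(Q)$, hence vanishes, so $q=p\in\mathscr R(P)\cap\mathscr R(Q)=0$, which leaves the element in $\mathscr N(P)\cap\mathscr N(Q)$ (``$\supseteq$'' being trivial). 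Then $\mathscr N(\Pi)=\mathscr N(P)\cap\mathscr N(Q)=\mathscr N(\widetilde P)$, and since $\Pi$ and $\widetilde P$ are adjointable idempotents with the same range and the same null space, Lemma~\ref{lem:an auxiliary lemma-1} gives $\Pi_1+\Pi_2=\Pi=\widetilde P$.

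The main obstacle is precisely that subspace identity; every other step is routine manipulation of idempotents plus the two cited lemmas. It is worth noting that separatedness is used in exactly two places — to guarantee that $\widetilde P$ exists (closedness of $\mathscr R(P)+\mathscr R(Q)$) and to force $q=p$ above ($\mathscr R(P)\cap\mathscr R(Q)=0$) — consistent with the example after Theorem~\ref{lem closed range of idempotents} showing that the separatedness hypothesis cannot be dropped.
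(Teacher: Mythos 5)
Your proof is correct, and the overall strategy coincides with the paper's: both hinge on the range/null-space descriptions of $\Pi_{P,Q}$, $\Pi_{Q,P}$ from Lemma~\ref{lem:idempotents induced by complemented projections} and on the uniqueness principle of Lemma~\ref{lem:an auxiliary lemma-1}. For (i)$\Longrightarrow$(ii) your argument is essentially the paper's, up to the direction of the null-space inclusion: the paper shows $\mathscr N(\Pi_1)=\mathscr N(\widetilde P-\Pi_2)\subseteq\mathscr R(\Pi_2)+\mathscr N(P)\cap\mathscr N(Q)=\mathscr N(\Pi_{P,Q})$, whereas you first extract $\Pi_1\Pi_2=\Pi_2\Pi_1=0$ algebraically and deduce the reverse inclusion $\mathscr N(\Pi_{P,Q})\subseteq\mathscr N(\Pi_1)$; either inclusion feeds into Lemma~\ref{lem:an auxiliary lemma-1} equally well. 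The genuine divergence is in (ii)$\Longrightarrow$(i): the paper verifies $\Pi_{P,Q}+\Pi_{Q,P}=\widetilde P$ by direct evaluation on the orthogonal decomposition $\mathscr E=(\mathscr R(P)+\mathscr R(Q))\oplus(\mathscr N(P)\cap\mathscr N(Q))$, using the identities $\Pi_{P,Q}P=P$, $\Pi_{Q,P}Q=Q$, $\Pi_{P,Q}Q=\Pi_{Q,P}P=0$, while you invoke the uniqueness lemma a second time, which forces you to prove the subspace identity $[\mathscr R(Q)+\mathscr N(P)\cap\mathscr N(Q)]\cap[\mathscr R(P)+\mathscr N(P)\cap\mathscr N(Q)]=\mathscr N(P)\cap\mathscr N(Q)$; your argument for it (the difference $q-p$ lies in $\mathscr R(P)+\mathscr R(Q)$ and in its orthogonal complement, hence vanishes, and then $q=p\in\mathscr R(P)\cap\mathscr R(Q)=0$) is correct and uses separatedness exactly where you say it does. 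Your route is more uniform but costs the extra lattice computation; the paper's is shorter because the explicit formulas for $\Pi_{P,Q}$ and $\Pi_{Q,P}$ make the direct check immediate. Both are valid proofs.
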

\begin{proof}
	Note that by \eqref{ali:induced range and null space-1}, we have
	$\mathscr{R}(\Pi_{P,Q})=\mathscr{R}(P)=\mathscr{R}(\Pi_1)$ and
	$$\mathscr{N}(\Pi_{P,Q})=\mathscr{R}(\Pi_2)+\mathscr{N}(P)\cap\mathscr{N}(Q).$$
	Therefore, by Lemma~\ref{lem:an auxiliary lemma-1}, we observe that
	$\Pi_{P,Q}=\Pi_1$ if and only if
	\begin{equation}\label{eqNullPi1}
	\mathscr{N}(\Pi_1)\subseteq \mathscr{R}(\Pi_2)+\mathscr{N}(P)\cap \mathscr{N}(Q).
	\end{equation}
	In a similar way, we conclude that
	$\Pi_{Q,P}=\Pi_2$ if and only if
	\begin{equation}\label{eqNullPi2}
	\mathscr{N}(\Pi_2)\subseteq\mathscr{R}(\Pi_1)+\mathscr{N}(P)\cap \mathscr{N}(Q).
	\end{equation}
	
	(i)$\Longrightarrow$(ii). Suppose that $\Pi_1+\Pi_2=\widetilde{P}$. Then we have
	$$\mathscr{N}(\Pi_1)=\mathscr{N}(\widetilde{P}-\Pi_2)\subseteq \mathscr{R}(\Pi_2)+\mathscr{N}(P)\cap \mathscr{N}(Q),$$
	$$\mathscr{N}(\Pi_2)=\mathscr{N}(\widetilde{P}-\Pi_1)\subseteq \mathscr{R}(\Pi_1)+\mathscr{N}(P)\cap \mathscr{N}(Q),$$
	which indicate the validity of \eqref{eqNullPi1} and \eqref{eqNullPi2}, and therefore, we have $\Pi_{P,Q}=\Pi_1$ and $\Pi_{Q,P}=\Pi_2$.
	
	(ii)$\Longrightarrow$(i). We show that $\Pi_{P,Q}+\Pi_{Q,P}=\widetilde{P}$. As a matter of fact, by a straightforward calculation as in \eqref{eqPiQ}, we have
	\[
	\Pi_{P,Q}P=P\,,\quad\Pi_{Q,P}Q=Q\,,\quad \Pi_{P,Q}Q=0=\Pi_{Q,P}P\,.
	\]
	Thus
	\begin{align}\label{eqP and Q}
	(\Pi_{P,Q}+\Pi_{Q,P})(Px+Qy)=Px+Qy
	\end{align}
	for each $x,y\in \mathcal E$. Moreover,
	\begin{align}\label{eqP and Q2}
	(\Pi_{P,Q}+\Pi_{Q,P})z=0, \qquad (z\in \mathscr{N}(P)\cap\mathscr{N}(Q))\,.
	\end{align}
	Since $\mathscr{R}(P)+\mathscr{R}(Q)$ is orthogonally complemented, employing \eqref{eqP and Q} and \eqref{eqP and Q2} we get the desired result.
\end{proof}

\section{ Concordant pairs of closed submodules in terms of the states}
\label{section HC}
For a positive linear functional $f$ on $\mathscr A$, we set $$\mathscr N_f=\{x\in \mathscr E:f\left(\langle x,x\rangle\right)=0\}.$$
It follows from the Cauchy--Schwarz inequality that
$$\mathscr N_f=\{x\in \mathscr E:f\left(\langle y,x\rangle\right)=f\left(\langle x,y\rangle\right)=0, \mbox{~for all~} y\in \mathscr E\}.$$
Therefore, $\mathscr N_f$ is a closed subspace of $\mathscr E$, and the quotient space $\mathscr E/\mathscr N_f$ is a pre-Hilbert space equipped with the inner product $\langle\cdot,\cdot\rangle_f$ defined by
\[
\langle x+\mathscr N_f,y+\mathscr N_f\rangle_f=f\big(\langle x,y\rangle\big).
\]
Let $\mathscr E_f$ be the  completion of $\mathscr E/\mathscr N_f$. Let $\iota_f:\mathscr E\to\mathscr E_f $ be the natural map, that is, $\iota_f(x)=x+\mathscr N_f$.  If $\mathscr H$ is a closed submodule of $\mathscr E$, then we consider $\mathscr H_f$ as the closure of $\iota_f(\mathscr H)=\{x+\mathscr N_f:x\in \mathscr H\}$. That is, $\mathscr H_f=\overline{\{x+\mathscr N_f:x\in \mathscr H\}}$, see \cite{Pa} for more information. We have the following Theorem.
\begin{theorem}\label{th there is a state}{\cite[Theorem 3.1]{Kaad}}
	Let $\mathscr L\subseteq \mathscr E$ be a closed convex subset of the Hilbert $C^*$-module $\mathscr E$ over $\mathscr A$. For each vector $ x_0\in \mathscr E\backslash \mathscr L$ there exists a state $f$ on $\mathscr A$ such that $\iota_f(x_0)$ is not in the closure of $\iota_f(\mathscr L)$.
	In particular, there exists a state $f$ such that $\iota_f(\mathscr L)$ is not dense in $\mathscr E_f$ and thus, when $\mathscr L$ is a
	submodule, $\iota_f(L)^\perp\neq0$.
\end{theorem}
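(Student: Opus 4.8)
The plan is to produce the state $f$ by a single Hahn--Banach separation carried out inside the real Banach space $\mathscr{A}_{\mathrm{sa}}$ of self-adjoint elements of $\mathscr{A}$, with $\mathscr{A}_+$ denoting its positive cone. We may assume $\mathscr{L}\neq\emptyset$, the empty case being trivial, and since $\mathscr{L}$ is closed with $x_0\notin\mathscr{L}$ the number $\delta:=\inf_{y\in\mathscr{L}}\|x_0-y\|^2$ is strictly positive. The reduction that guides everything is that, for a state $f$ and every $y\in\mathscr{L}$,
\[
\|\iota_f(x_0)-\iota_f(y)\|_f^2=\langle\iota_f(x_0-y),\iota_f(x_0-y)\rangle_f=f(\langle x_0-y,x_0-y\rangle),
\]
so it is enough to find a state $f$ and an $\varepsilon>0$ with $f(\langle x_0-y,x_0-y\rangle)\ge\varepsilon$ for all $y\in\mathscr{L}$; this forces $\iota_f(x_0)$ to sit at distance at least $\sqrt{\varepsilon}$ from $\iota_f(\mathscr{L})$, hence outside its closure.

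The set I would separate from $0$ is
\[
\widetilde{B}:=\{\langle x_0-y,x_0-y\rangle:y\in\mathscr{L}\}+\mathscr{A}_+\subseteq\mathscr{A}_{\mathrm{sa}}.
\]
First I would check that $\widetilde{B}$ is convex. This rests on a segment-wise operator convexity of the $\mathscr{A}_{\mathrm{sa}}$-valued quadratic map $y\mapsto\langle x_0-y,x_0-y\rangle$: writing $a_i=x_0-y_i$ and $y_3=ty_1+(1-t)y_2\in\mathscr{L}$ for $t\in[0,1]$, a direct expansion of $\langle x_0-y_3,x_0-y_3\rangle=\langle ta_1+(1-t)a_2,ta_1+(1-t)a_2\rangle$ gives
\[
t\langle a_1,a_1\rangle+(1-t)\langle a_2,a_2\rangle-\langle x_0-y_3,x_0-y_3\rangle=t(1-t)\langle a_1-a_2,a_1-a_2\rangle\ge 0,
\]
so the quadratic part $B:=\{\langle x_0-y,x_0-y\rangle:y\in\mathscr{L}\}$ satisfies $tb_1+(1-t)b_2\ge b_3$ for some $b_3\in B$, and adding the positive summands puts $t\beta_1+(1-t)\beta_2$ back in $B+\mathscr{A}_+=\widetilde{B}$. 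Next I would record a distance estimate: every $b\in B$ is positive with $\|b\|=\|x_0-y\|^2\ge\delta$, and since each $\beta\in\widetilde{B}$ is of the form $b+p$ with $b\in B$, $p\in\mathscr{A}_+$, we have $0\le b\le\beta$ and hence $\|\beta\|\ge\|b\|\ge\delta$. Thus $\widetilde{B}$ is a nonempty convex subset of $\mathscr{A}_{\mathrm{sa}}$ whose distance to $0$ is at least $\delta>0$; in particular $0\notin\overline{\widetilde{B}}$.

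Now apply the Hahn--Banach separation theorem (strict separation of the compact convex set $\{0\}$ from the disjoint closed convex set $\overline{\widetilde{B}}$ in $\mathscr{A}_{\mathrm{sa}}$): there are a bounded real-linear functional $\varphi$ on $\mathscr{A}_{\mathrm{sa}}$ and a $\gamma>0$ with $\varphi\ge\gamma$ on $\widetilde{B}$ and $\varphi(0)=0$. The extra point that makes $\varphi$ usable is positivity: fixing some $b_0\in B$ and an arbitrary $p\in\mathscr{A}_+$, we have $b_0+tp\in\widetilde{B}$ for all $t\ge 0$, so $\varphi(b_0)+t\varphi(p)\ge\gamma$ for all $t\ge 0$, which forces $\varphi(p)\ge 0$. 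Therefore $\varphi$ extends complex-linearly (still denoted $\varphi$) to a nonzero positive linear functional on $\mathscr{A}$ (nonzero because $\varphi(b_0)\ge\gamma$), which after normalization is a state $f$ satisfying $f(\langle x_0-y,x_0-y\rangle)\ge\gamma/\|\varphi\|>0$ for all $y\in\mathscr{L}$. By the first paragraph this gives $\iota_f(x_0)\notin\overline{\iota_f(\mathscr{L})}$, so $\iota_f(\mathscr{L})$ is not dense in $\mathscr{E}_f$; and when $\mathscr{L}$ is a submodule, $\overline{\iota_f(\mathscr{L})}$ is a proper closed $\mathbb{C}$-subspace of the Hilbert space $\mathscr{E}_f$, whence $\iota_f(\mathscr{L})^\perp\neq 0$.

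The step I expect to be the main obstacle is the convexity of $\widetilde{B}$: the natural set to separate from $0$ is built from a quadratic $\mathscr{A}_{\mathrm{sa}}$-valued expression and is not convex on its face, and it is precisely the operator-convexity identity above---together with the bookkeeping with $\mathscr{A}_+$ that forces the separating functional to be positive---that reduces the vector-valued geometry to one ordinary scalar Hahn--Banach argument. The norm bound $\|\beta\|\ge\delta$ on $\widetilde{B}$ is the second ingredient not to be skipped, since it is what upgrades plain separation to the strict separation needed to expel $\iota_f(x_0)$ from the closure of $\iota_f(\mathscr{L})$. In the special case $\mathscr{A}=\mathbb{C}$ this argument collapses to the familiar nearest-point separation of a point from a closed convex subset of a Hilbert space.
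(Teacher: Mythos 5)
Your argument is correct: the convexity identity for $y\mapsto\langle x_0-y,x_0-y\rangle$ modulo $\mathscr{A}_+$, the lower bound $\|\beta\|\ge\delta$ coming from $0\le b\le\beta$, and the positivity of the separating functional are exactly the points that need checking, and you check them. The paper itself gives no proof of this statement --- it is quoted from \cite[Theorem~3.1]{Kaad} --- and your Hahn--Banach separation in $\mathscr{A}_{\mathrm{sa}}$ is essentially the argument given there, so there is nothing to correct or to contrast.
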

The above theorem, \cite[Corollary 1.17]{Pi} and the paragraph after \cite[Proposition 1.6]{Bram}, pint out the following lemma.

\begin{lemma}\label{lemma eq}
	Let $\mathscr H$ and $\mathscr K$ be closed submodules of $\mathscr E$. Then $\mathscr H=\mathscr K$ if and only if $\mathscr H_f=\mathscr K_f$ for each $f\in \mathrm{S}(\mathscr A)$, if and only if $\mathscr H_f=\mathscr K_f$ for each $f\in \mathrm{PS}(\mathscr A)$.
\end{lemma}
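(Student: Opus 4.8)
The plan is to prove a cycle of three implications: (a) if $\mathscr H=\mathscr K$ then $\mathscr H_f=\mathscr K_f$ for every $f\in\mathrm S(\mathscr A)$; (b) if $\mathscr H_f=\mathscr K_f$ for every $f\in\mathrm S(\mathscr A)$ then the same holds for every $f\in\mathrm{PS}(\mathscr A)$; and (c) if $\mathscr H_f=\mathscr K_f$ for every $f\in\mathrm{PS}(\mathscr A)$ then $\mathscr H=\mathscr K$. Implication (a) is immediate, because $\mathscr H=\mathscr K$ forces $\iota_f(\mathscr H)=\iota_f(\mathscr K)$ for every positive functional $f$, and passing to closures gives $\mathscr H_f=\mathscr K_f$. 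Implication (b) is trivial since $\mathrm{PS}(\mathscr A)\subseteq\mathrm S(\mathscr A)$. So the entire content lies in implication (c), which I would establish contrapositively.

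Suppose $\mathscr H\ne\mathscr K$. Interchanging $\mathscr H$ and $\mathscr K$ if necessary (the hypothesis in (c) is symmetric in the two submodules), choose $x_0\in\mathscr H\setminus\mathscr K$. Since $\mathscr K$ is a closed submodule of $\mathscr E$, it is in particular a closed convex subset, so Theorem~\ref{th there is a state} applies and yields a state $f_0\in\mathrm S(\mathscr A)$ for which $\iota_{f_0}(x_0)$ is not in the closure of $\iota_{f_0}(\mathscr K)$; that is, $\iota_{f_0}(x_0)\notin\mathscr K_{f_0}$.

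The remaining step is to replace $f_0$ by a \emph{pure} state carrying the same separation, and this is exactly the point isolated by \cite[Corollary~1.17]{Pi} together with the discussion following \cite[Proposition~1.6]{Bram}: if a vector is separated from the localization of a closed submodule by some state, then it is already separated by some pure state. Hence there is $g\in\mathrm{PS}(\mathscr A)$ with $\iota_g(x_0)\notin\mathscr K_g$. On the other hand $x_0\in\mathscr H$ gives $\iota_g(x_0)\in\iota_g(\mathscr H)\subseteq\mathscr H_g$, so $\mathscr H_g\ne\mathscr K_g$, contradicting the hypothesis that $\mathscr H_f=\mathscr K_f$ for all $f\in\mathrm{PS}(\mathscr A)$. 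Therefore $\mathscr H=\mathscr K$, which closes the cycle. I expect the passage from a separating state to a separating pure state to be the only delicate point; the rest is just unravelling the definitions of $\mathscr N_f$, $\iota_f$, and $\mathscr H_f$ together with a single application of Theorem~\ref{th there is a state}.
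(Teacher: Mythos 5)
Your proposal is correct and follows essentially the same route as the paper, which proves this lemma simply by citing Theorem~\ref{th there is a state}, \cite[Corollary~1.17]{Pi}, and the remark after \cite[Proposition~1.6]{Bram} --- exactly the three ingredients you assemble. Your write-up is in fact more explicit than the paper's, since you organize the equivalences into a cycle and spell out the contrapositive argument and the state-to-pure-state upgrade that the paper leaves to the references.
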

Employing Theorem \ref{th there is a state} and \cite[Theorem 2.1]{Kaad2} to get  equivalences for orthogonally complemented submodules, see also \cite[Proposition 1.6]{Bram}.
\begin{proposition}\label{Thorthogonaliy condition}
	Let $\mathscr H$ be a closed submodule of $\mathscr E$. Then the following statements are equivalent:
	\begin{itemize}
		\item[(i)]
		$\mathscr H$
		is orthogonally complemented in  $\mathscr E$.
		\item[(ii)]
		$
		(\mathscr H_f)^\perp=(\mathscr H^\perp)_f$ for each $f\in \mathrm{S}(\mathscr A).
		$
		\item[(iii)]
		$
		(\mathscr H_f)^\perp=(\mathscr H^\perp)_f$ for each $f\in \mathrm{PS}(\mathscr A).
		$
	\end{itemize}
\end{proposition}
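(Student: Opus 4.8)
The plan is to establish the cycle (i)$\Rightarrow$(ii)$\Rightarrow$(iii)$\Rightarrow$(i). The implication (ii)$\Rightarrow$(iii) is immediate since $\mathrm{PS}(\mathscr A)\subseteq\mathrm{S}(\mathscr A)$. The common thread for everything else is the observation that, for \emph{every} positive functional $f$ on $\mathscr A$, one inclusion is automatic, namely $(\mathscr H^\perp)_f\subseteq(\mathscr H_f)^\perp$: if $x\in\mathscr H^\perp$ and $y\in\mathscr H$ then $\langle\iota_f(x),\iota_f(y)\rangle_f=f(\langle x,y\rangle)=0$, so $\iota_f(x)\perp\iota_f(\mathscr H)$ and, by continuity of the inner product, $\iota_f(x)\perp\mathscr H_f$; since $(\mathscr H_f)^\perp$ is closed, passing to closures gives $(\mathscr H^\perp)_f\subseteq(\mathscr H_f)^\perp$. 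Thus in both (ii) and (iii) only the reverse inclusion carries content, and it is also worth recording that $\mathscr H_f\perp(\mathscr H^\perp)_f$ always.

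For (i)$\Rightarrow$(ii), assume $\mathscr E=\mathscr H\oplus\mathscr H^\perp$ and fix $f\in\mathrm{S}(\mathscr A)$. Then $\iota_f(\mathscr E)=\iota_f(\mathscr H)+\iota_f(\mathscr H^\perp)\subseteq\mathscr H_f+(\mathscr H^\perp)_f$, and the left-hand side is dense in $\mathscr E_f$ by construction. On the other hand $\mathscr H_f+(\mathscr H^\perp)_f$ is the sum of two mutually orthogonal closed subspaces of the Hilbert space $\mathscr E_f$, and hence is closed. Being both closed and dense it equals $\mathscr E_f$, so $\mathscr E_f=\mathscr H_f\oplus(\mathscr H^\perp)_f$ is an orthogonal decomposition and therefore $(\mathscr H_f)^\perp=(\mathscr H^\perp)_f$. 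This proves (ii), and a fortiori (iii).

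For (iii)$\Rightarrow$(i), the substantial direction, I would proceed in two stages. First I would deduce $\mathscr H=\mathscr H^{\perp\perp}$: applying the automatic inclusion to $\mathscr H^\perp$ in place of $\mathscr H$ gives $(\mathscr H^{\perp\perp})_f\subseteq((\mathscr H^\perp)_f)^\perp$, and by (iii) together with the identity $((\mathscr H_f)^\perp)^\perp=\mathscr H_f$ (valid since $\mathscr H_f$ is a closed subspace of $\mathscr E_f$) the right-hand side is $\mathscr H_f$; since also $\mathscr H_f\subseteq(\mathscr H^{\perp\perp})_f$, we get $\mathscr H_f=(\mathscr H^{\perp\perp})_f$ for every $f\in\mathrm{PS}(\mathscr A)$, whence $\mathscr H=\mathscr H^{\perp\perp}$ by Lemma~\ref{lemma eq}. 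Second, and this is where the genuine work lies, one must upgrade the fibrewise identities $(\mathscr H_f)^\perp=(\mathscr H^\perp)_f$ to the module decomposition $\mathscr E=\mathscr H+\mathscr H^\perp$. Theorem~\ref{th there is a state} (run with pure states via \cite[Corollary~1.17]{Pi}) yields only that $\overline{\mathscr H+\mathscr H^\perp}=\mathscr E$: were this closure proper, a separating pure state $f$ would make $\iota_f(\mathscr H+\mathscr H^\perp)$ non-dense in $\mathscr E_f$, contradicting $\overline{\iota_f(\mathscr H)+\iota_f(\mathscr H^\perp)}\supseteq\mathscr H_f+(\mathscr H_f)^\perp=\mathscr E_f$. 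To pass from this density to actual orthogonal complementedness I would invoke \cite[Theorem~2.1]{Kaad2} (see also \cite[Proposition~1.6]{Bram}), which is precisely the localization criterion for a closed submodule to be orthogonally complemented.

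The main obstacle is exactly this last step: the Hilbert spaces $\mathscr E_f$ record only the norm-topological behaviour of $\mathscr H+\mathscr H^\perp$, so fibrewise Hilbert-space geometry alone cannot manufacture the adjointable projection $P_\mathscr H$; the passage from ``orthogonally complemented after localization at every pure state'' to ``orthogonally complemented'' is a genuinely $C^*$-algebraic statement, supplied by \cite[Theorem~2.1]{Kaad2}. Everything else — the automatic inclusion, the closedness of orthogonal sums in $\mathscr E_f$, and the identification $\mathscr H=\mathscr H^{\perp\perp}$ — is elementary bookkeeping around that input, and one should only take care that all the separation arguments are available in the pure-state formulation, so that hypothesis (iii), stated over $\mathrm{PS}(\mathscr A)$ alone, is enough.
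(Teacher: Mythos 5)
Your proposal is correct and follows essentially the same route as the paper, which gives no written proof beyond citing Theorem~\ref{th there is a state}, \cite[Theorem~2.1]{Kaad2}, and \cite[Proposition~1.6]{Bram}; your write-up simply fills in the routine localization details (the automatic inclusion $(\mathscr H^\perp)_f\subseteq(\mathscr H_f)^\perp$ and the direction (i)$\Rightarrow$(ii)) while delegating the substantive implication (iii)$\Rightarrow$(i) to the same local--global principle of Kaad--Lesch that the paper invokes. No discrepancy to report.
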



\begin{definition}\label{def:concordant}	
	The pair $(\mathscr H,\mathscr K)$ of closed submodules of $\mathscr E$ is said to be \emph{concordant}  if $\overline{\mathscr H^\perp + \mathscr K^\perp}$ is orthogonally complemented in $\mathscr E$, that is, $\mathscr E$ can be decomposed orthogonally as
	\begin{equation}\label{equ:orth of E-concordant}\mathscr E = (\mathscr H\cap\mathscr K)\oplus \overline{\mathscr H^\perp + \mathscr K^\perp}.\end{equation}
\end{definition}

\begin{theorem}\label{th concorant}
	Let $\mathscr H$ and $\mathscr K$ be closed submodules of $\mathscr E$. Then the following statements are equivalent:
	\begin{itemize}
\item[(i)]  The pair $(\mathscr H,\mathscr K)$  is concordant.
\item[(ii)] For every $f\in \mathrm{S}(\mathscr A)$,
\begin{equation}\label{pure state}
(\mathscr H\cap\mathscr K)_f=\big((\mathscr H^\perp)_f\big)^\perp\cap \big((\mathscr K^\perp)_f\big)^\perp\,.
\end{equation}
\item[(iii)] For every $f\in {\mathrm PS}(\mathscr A)$,
\begin{equation}\label{pure state 1}
(\mathscr H\cap\mathscr K)_f=\big((\mathscr H^\perp)_f\big)^\perp\cap \big((\mathscr K^\perp)_f\big)^\perp\,.
\end{equation}
	\end{itemize} 
\end{theorem}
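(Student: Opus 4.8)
The plan is to prove Theorem~\ref{th concorant} by reducing everything to the localized Hilbert spaces $\mathscr E_f$ and then invoking the separation results already available. First I would observe that the key obstacle is to pass correctly between the submodule operations ($\cap$, $\overline{\;\cdot\;+\;\cdot\;}$, $(\cdot)^\perp$) and their localized counterparts, since localization does \emph{not} in general commute with intersection or with taking orthogonal complements. The way around this is to route through orthogonal complementarity: by Definition~\ref{def:concordant}, $(\mathscr H,\mathscr K)$ is concordant exactly when $\overline{\mathscr H^\perp+\mathscr K^\perp}$ is orthogonally complemented, and $\big(\overline{\mathscr H^\perp+\mathscr K^\perp}\big)^\perp = (\mathscr H^\perp)^\perp\cap(\mathscr K^\perp)^\perp = \mathscr H\cap\mathscr K$ (using $\mathscr H=\mathscr H^{\perp\perp}$ for closed submodules, which holds here since $\mathscr H,\mathscr K$ are closed). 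So concordance is equivalent to $\mathscr E=(\mathscr H\cap\mathscr K)\oplus\overline{\mathscr H^\perp+\mathscr K^\perp}$.

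Next I would apply Proposition~\ref{Thorthogonaliy condition} to the closed submodule $\mathscr L:=\overline{\mathscr H^\perp+\mathscr K^\perp}$: it is orthogonally complemented if and only if $(\mathscr L_f)^\perp=(\mathscr L^\perp)_f$ for every $f\in\mathrm S(\mathscr A)$ (resp.\ every $f\in\mathrm{PS}(\mathscr A)$). Here $\mathscr L^\perp=\mathscr H\cap\mathscr K$ as noted, so the condition reads $(\mathscr L_f)^\perp=(\mathscr H\cap\mathscr K)_f$. It then remains to identify $\mathscr L_f$, i.e.\ the localization of $\overline{\mathscr H^\perp+\mathscr K^\perp}$. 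I claim
\[
\big(\overline{\mathscr H^\perp+\mathscr K^\perp}\big)_f=\overline{(\mathscr H^\perp)_f+(\mathscr K^\perp)_f}\,,
\]
which follows because $\iota_f$ has dense range considerations: $\iota_f(\mathscr H^\perp+\mathscr K^\perp)=\iota_f(\mathscr H^\perp)+\iota_f(\mathscr K^\perp)$ is dense in both sides (localization commutes with finite sums of submodules on the nose, and with closure after applying $\overline{\phantom{x}}$, because $\iota_f$ is continuous with dense-enough image in the relevant closures — more precisely, $\iota_f(\overline{M})\subseteq\overline{\iota_f(M)}$ always, and the reverse containment of closures holds because $\iota_f(M)\subseteq\iota_f(\overline M)$). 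Then in the Hilbert space $\mathscr E_f$ we have the clean identity $\big(\overline{(\mathscr H^\perp)_f+(\mathscr K^\perp)_f}\big)^\perp=\big((\mathscr H^\perp)_f\big)^\perp\cap\big((\mathscr K^\perp)_f\big)^\perp$. Combining, $(\mathscr L_f)^\perp=\big((\mathscr H^\perp)_f\big)^\perp\cap\big((\mathscr K^\perp)_f\big)^\perp$, and Proposition~\ref{Thorthogonaliy condition} turns concordance into exactly \eqref{pure state} (for all states) or \eqref{pure state 1} (for all pure states).

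To organize the writeup: (1) record the two algebraic identities $\mathscr L^\perp=\mathscr H\cap\mathscr K$ and $(\mathscr L_f)^\perp=\big((\mathscr H^\perp)_f\big)^\perp\cap\big((\mathscr K^\perp)_f\big)^\perp$; (2) apply Proposition~\ref{Thorthogonaliy condition} to $\mathscr L$; (3) read off the equivalences. The main obstacle, as flagged above, is step~(1)'s second identity, specifically justifying $\big(\overline{\mathscr H^\perp+\mathscr K^\perp}\big)_f=\overline{(\mathscr H^\perp)_f+(\mathscr K^\perp)_f}$. The containment $\subseteq$ is routine continuity of $\iota_f$; for $\supseteq$ one uses that $\iota_f(\mathscr H^\perp)\subseteq\iota_f\big(\overline{\mathscr H^\perp+\mathscr K^\perp}\big)\subseteq\big(\overline{\mathscr H^\perp+\mathscr K^\perp}\big)_f$, similarly for $\mathscr K^\perp$, and then take the closed linear span. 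I do not expect any genuine difficulty in (2)–(3), since Proposition~\ref{Thorthogonaliy condition} is stated in precisely the form needed and the Hilbert-space fact $(\overline{A+B})^\perp=A^\perp\cap B^\perp$ is elementary.
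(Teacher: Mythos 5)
Your overall strategy --- localize via $\iota_f$, apply Proposition~\ref{Thorthogonaliy condition} to $\mathscr L:=\overline{\mathscr H^\perp+\mathscr K^\perp}$, and use the Hilbert-space identity $\bigl(\overline{A+B}\bigr)^\perp=A^\perp\cap B^\perp$ in $\mathscr E_f$ --- is the same as the paper's, and your identification $\mathscr L_f=\overline{(\mathscr H^\perp)_f+(\mathscr K^\perp)_f}$ is correct. But there is a genuine gap at the first step: the claim that $\mathscr H=\mathscr H^{\perp\perp}$ holds ``since $\mathscr H$ is closed'' is false in Hilbert $C^*$-modules. Biduality fails for closed submodules that are not orthogonally complemented; e.g.\ for $\mathscr E=\mathscr A=C[0,1]$ and $\mathscr H=\{\tau:\tau(0)=0\}$ one has $\mathscr H^\perp=0$, hence $\mathscr H^{\perp\perp}=\mathscr E\neq\mathscr H$ (exactly the example the paper gives after Corollary~\ref{concordant condition}). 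Consequently $\mathscr L^\perp=\mathscr H^{\perp\perp}\cap\mathscr K^{\perp\perp}$ need not equal $\mathscr H\cap\mathscr K$, and concordance is \emph{not} equivalent to ``$\mathscr L$ is orthogonally complemented'': in that same example with $\mathscr K=\{\tau:\tau(1)=0\}$, $\mathscr L=0$ is trivially orthogonally complemented, yet the pair is not concordant because $(\mathscr H\cap\mathscr K)\oplus\mathscr L\neq\mathscr E$. The operative content of Definition~\ref{def:concordant} is the decomposition $\mathscr E=(\mathscr H\cap\mathscr K)\oplus\mathscr L$, which packages \emph{two} conditions: $\mathscr L$ is orthogonally complemented \emph{and} $\mathscr L^\perp=\mathscr H\cap\mathscr K$.

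This breaks your (ii)/(iii)$\Rightarrow$(i) direction: Proposition~\ref{Thorthogonaliy condition} applied to $\mathscr L$ tests the equality $(\mathscr L_f)^\perp=(\mathscr L^\perp)_f$, whereas the hypothesis gives $(\mathscr L_f)^\perp=(\mathscr H\cap\mathscr K)_f$; these coincide only once one knows $\mathscr L^\perp=\mathscr H\cap\mathscr K$, which is part of what must be proved. The repair is what the paper actually does: from the always-valid inclusions $\mathscr H\cap\mathscr K\subseteq\mathscr L^\perp$ and $(\mathscr L^\perp)_f\subseteq(\mathscr L_f)^\perp$ one gets the sandwich $(\mathscr H\cap\mathscr K)_f\subseteq(\mathscr L^\perp)_f\subseteq(\mathscr L_f)^\perp=(\mathscr H\cap\mathscr K)_f$, forcing equality throughout; Proposition~\ref{Thorthogonaliy condition} then yields orthogonal complementarity, and Lemma~\ref{lemma eq} is needed to upgrade the local equality $(\mathscr L^\perp)_f=(\mathscr H\cap\mathscr K)_f$ (for all pure states) to the global identity $\mathscr L^\perp=\mathscr H\cap\mathscr K$, whence $\mathscr E=\mathscr L\oplus(\mathscr H\cap\mathscr K)$. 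Your forward direction (i)$\Rightarrow$(ii) survives unchanged, since concordance itself supplies $\mathscr L^\perp=\mathscr H\cap\mathscr K$ without any appeal to biduality.
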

\begin{proof}
	(i)$\Longrightarrow$(ii). Suppose that $(\mathscr H, \mathscr K)$ is  concordant. Given $f\in \mathrm{S}(\mathscr A)$, let $X\subseteq \mathscr E/\mathscr N_f$, $Y\subseteq \mathscr E$ and
	$Z\subseteq \mathscr E_f$ be defined respectively by
	$$X=\big\{x+\mathscr N_f: x\in \mathscr H^\perp+\mathscr K^\perp\big\},\ Y=\overline{\mathscr H^\perp+\mathscr K^\perp},\ Z=\big((\mathscr H^\perp)_f\big)^\perp\cap \big((\mathscr K^\perp)_f\big)^\perp.$$
	It is clear that
	\begin{equation}\label{eqsub}
	Y^\perp=\mathscr H\cap\mathscr K,\quad Y_f= \overline{X}\quad\mbox{and}\quad X^\perp=Z.
	\end{equation}
	Making a use of Theorem \ref{Thorthogonaliy condition} to $Y$,  we get
	\begin{align*}(\mathscr H\cap\mathscr K)_f=(Y^\perp)_f=(Y_f)^\perp=\overline{X}^\perp=X^\perp=Z
	\end{align*}
	in the Hilbert space $\mathscr{E}_f$. This shows the validity of \eqref{pure state}.

(ii)$\Longrightarrow$(iii). It is clear.\\
	(iii)$\Longrightarrow$(i).   Given
	$f\in {\mathrm PS}(\mathscr A)$, let $X$, $Y$ and $Z$ be defined as above. Then
	\begin{align}\label{eq121}
	\big[(\mathscr H\cap\mathscr K)_f\big]^\perp=Z^\perp=\overline{X}=Y_f\subseteq \big[(\mathscr H\cap\mathscr K)^\perp\big]_f\subseteq \big[(\mathscr H\cap\mathscr K)_f\big]^\perp,
	\end{align}
	which gives
	$$\big[(\mathscr H\cap\mathscr K)_f\big]^\perp=\big[(\mathscr H\cap\mathscr K)^\perp\big]_f.$$
	From Theorem \ref{Thorthogonaliy condition} we  conclude that $\mathscr H\cap\mathscr K$ is orthogonally complemented in $\mathscr E$.
	Furthermore, from \eqref{eq121} we  obtain
	$$Y_f=\big[(\mathscr H\cap\mathscr K)^\perp\big]_f,\quad (f\in {\mathrm PS}(\mathscr A)),$$
	whence,  by Lemma \ref{lemma eq},  $Y=(\mathscr H\cap\mathscr K)^\perp$.
\end{proof}
It is remarkable if the pair $(\mathscr H,\mathscr K)$ of closed submodules of $\mathscr E$  is concordant, then \eqref{pure state} gives 
\[
\mathscr H_f\cap \mathscr K_f\subseteq \big((\mathscr H^\perp)_f\big)^\perp\cap \big((\mathscr K^\perp)_f\big)^\perp=(\mathscr H\cap \mathscr K)_f\subseteq \mathscr H_f\cap \mathscr K_f\,.
\]
This is a corollary as follows: 
\begin{corollary}\label{concordant condition}
	Let the pair $(\mathscr H,\mathscr K)$ of closed submodules of $\mathscr E$  be concordant. Then 
	\[
	(\mathscr H\cap \mathscr K)_f=\mathscr H_f\cap \mathscr K_f\qquad(f\in {\mathrm S}(\mathscr A))\,. 
	\]
\end{corollary}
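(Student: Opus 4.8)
The plan is to prove the two inclusions of the asserted equality separately, for a fixed state $f\in\mathrm S(\mathscr A)$; only one of them uses concordance, and it does so through Theorem~\ref{th concorant}.

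First I would dispose of the elementary inclusion $(\mathscr H\cap\mathscr K)_f\subseteq\mathscr H_f\cap\mathscr K_f$, which holds for an arbitrary pair of closed submodules and any $f$. Indeed $\iota_f(\mathscr H\cap\mathscr K)\subseteq\iota_f(\mathscr H)$ and $\iota_f(\mathscr H\cap\mathscr K)\subseteq\iota_f(\mathscr K)$, so $\iota_f(\mathscr H\cap\mathscr K)$ is contained in the closed sets $\mathscr H_f$ and $\mathscr K_f$; passing to the closure gives $(\mathscr H\cap\mathscr K)_f\subseteq\mathscr H_f\cap\mathscr K_f$.

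For the reverse inclusion I would first record the general fact that for any closed submodule $\mathscr M\subseteq\mathscr E$ and any state $f$ one has $\mathscr M_f\subseteq\big((\mathscr M^\perp)_f\big)^\perp$ in the Hilbert space $\mathscr E_f$: for $x\in\mathscr M$ and $y\in\mathscr M^\perp$ one computes $\langle\iota_f(x),\iota_f(y)\rangle_f=f(\langle x,y\rangle)=f(0)=0$, so $\iota_f(\mathscr M)\perp\iota_f(\mathscr M^\perp)$, and orthogonality is preserved under taking closures. Applying this with $\mathscr M=\mathscr H$ and with $\mathscr M=\mathscr K$ yields $\mathscr H_f\subseteq\big((\mathscr H^\perp)_f\big)^\perp$ and $\mathscr K_f\subseteq\big((\mathscr K^\perp)_f\big)^\perp$, hence $\mathscr H_f\cap\mathscr K_f\subseteq\big((\mathscr H^\perp)_f\big)^\perp\cap\big((\mathscr K^\perp)_f\big)^\perp$. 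Now concordance enters: by Theorem~\ref{th concorant}, equation~\eqref{pure state} holds for every $f\in\mathrm S(\mathscr A)$, i.e. the right-hand side above equals $(\mathscr H\cap\mathscr K)_f$. Therefore $\mathscr H_f\cap\mathscr K_f\subseteq(\mathscr H\cap\mathscr K)_f$, and combined with the first step this gives the desired equality.

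I do not expect a genuine obstacle here: the substantive content is already encapsulated in Theorem~\ref{th concorant}, and the only point needing a line of verification is the orthogonality $\iota_f(\mathscr M)\perp\iota_f(\mathscr M^\perp)$ together with its stability under closure. One could even bypass the general fact entirely and simply invoke the chain of inclusions displayed immediately before the statement of the corollary, which already exhibits both containments; the proof above merely spells out why each link in that chain is valid.
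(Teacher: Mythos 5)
Your argument is correct and is essentially the paper's own proof: the authors derive the corollary from the chain $\mathscr H_f\cap \mathscr K_f\subseteq \big((\mathscr H^\perp)_f\big)^\perp\cap \big((\mathscr K^\perp)_f\big)^\perp=(\mathscr H\cap \mathscr K)_f\subseteq \mathscr H_f\cap \mathscr K_f$, where the middle equality is exactly \eqref{pure state} from Theorem~\ref{th concorant}. Your write-up merely supplies the (correct) justifications for the two outer inclusions that the paper leaves implicit.
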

The following example shows that the reverse of Corollary \ref{concordant condition} is not valid.
\begin{example}
Let $\mathscr{A}=C[0,1]$ and $\mathscr E=\mathscr A$. Let $\mathscr H$ and $\mathscr K$ be closed submodules of $\mathscr E$ defined as follows:
\[
\mathscr H=\{\tau \in \mathscr A: \tau(0)=0\}\quad \mbox{and}\quad \mathscr K=\{\tau\in \mathscr A:\tau(1)=0 \}\,.
\]  
Then 
\[
\mathscr H^\perp=\mathscr K^\perp=\{0\}\quad \mbox{and}\quad \mathscr H\cap \mathscr K=\{\tau \in \mathscr A: \tau(0)=\tau(1)=0\}\,.
\] 
Hence $(\mathscr H,\mathscr K)$ is not concordant. Let $f$ be an arbitrary  pure state on $\mathscr A$. Then there exists $ t_0\in [0,1]$ such that $f(\tau)=\tau(t_0)$ for all $\tau\in \mathscr A$. It is easy to check that $(\mathscr H\cap \mathscr K)_f=\mathscr H_f\cap \mathscr K_f$.
\end{example} 

Let  $\mathscr H$ and $\mathscr K$ be closed submodules of $\mathscr E$. As in the Hilbert space case, we define  the cosine of the Friedrichs angle between $\mathscr H$ and $\mathscr K$ by 
\begin{equation}\label{definiatin of cH K}
	c(\mathscr H,\mathscr K)=\sup\{\|\langle x,y\rangle\|:\|x\|=\|y\|=1,x\in \mathscr H\cap (\mathscr H\cap \mathscr K)^\perp, y\in \mathscr K\cap (\mathscr H\cap \mathscr K)^\perp\}
\end{equation}
  Let  $\mathscr H, \mathscr K$, and $\mathscr H\cap \mathscr K$ be  orthogonally complemented in $\mathscr E$.  Then in \cite{Luo-Moslehian-Xu}, $c(\mathscr H,\mathscr K)$ 
is formulated by
\begin{equation}\label{equ:formula for c H K}
c(\mathscr H,\mathscr K) = \|P_\mathscr HP_\mathscr K(I - P_{\mathscr H\cap\mathscr K})\| =\|P_\mathscr HP_\mathscr K-P_{\mathscr H\cap \mathscr K}\|\,.
\end{equation}
\begin{definition}
	Let $\mathscr H$ and $\mathscr K$ be closed submodules of $\mathscr E$. We define the {\it  cosine of the local Friedrichs angle} between $\mathscr H$ and $\mathscr K$ by 
	\[
	\alpha(\mathscr H,\mathscr K):=\sup_{f\in \mathrm{S}(\mathscr A)} c(\mathscr H_f,\mathscr K_f)\,.
	\]
	Also, we define the {\it cosine of the local Dixmier angle} between $\mathscr H$ and $\mathscr K$ by 
	\[
	\alpha_0(\mathscr H,\mathscr K):=\sup_{f\in \mathrm{S}(\mathscr A)} c_0(\mathscr H_f,\mathscr K_f)\,.
	\]
\end{definition}

\begin{remark}
	Let  $\mathscr H$ and $\mathscr K$ be orthogonally complemented submodules of $\mathscr E$ such that $(\mathscr H, \mathscr K)$ is concordant. It follows from \cite[Corollary 3.4]{Bram} that
	\begin{equation*}
	\alpha(\mathscr H,\mathscr K)=c(\mathscr H,\mathscr K)\,,
	\end{equation*}
\end{remark}

\begin{theorem}\label{Friedrichs angle}
	Let  $\mathscr H$ and $\mathscr K$ be closed submodules of $\mathscr E$ such that $(\mathscr H, \mathscr K)$ is concordant  and let $\overline{\mathscr H+\mathscr K}$ be an orthogonally complemented submodule. Then
	\begin{equation*}
	\alpha(\mathscr H,\mathscr K)=\alpha(\mathscr H^\perp,\mathscr K^\perp)\,,
	\end{equation*}
\end{theorem}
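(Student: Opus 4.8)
The plan is to pass to the localizations $\mathscr E_f$ and reduce the identity to a purely Hilbert-space statement. By the definition of the local Friedrichs angle, $\alpha(\mathscr H,\mathscr K)=\sup_{f\in \mathrm{S}(\mathscr A)}c(\mathscr H_f,\mathscr K_f)$ and $\alpha(\mathscr H^\perp,\mathscr K^\perp)=\sup_{f\in \mathrm{S}(\mathscr A)}c\big((\mathscr H^\perp)_f,(\mathscr K^\perp)_f\big)$, so it is enough to prove that, for each state $f$,
\[
c(\mathscr H_f,\mathscr K_f)=c\big((\mathscr H^\perp)_f,(\mathscr K^\perp)_f\big),
\]
an equality of Friedrichs cosines of closed subspaces of the Hilbert space $\mathscr E_f$. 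Two standard facts will be used throughout: the classical invariance of the Friedrichs angle under orthogonal complementation in a Hilbert space, namely $c(M,N)=c(M^\perp,N^\perp)$ for all closed subspaces $M,N$ (see, e.g., \cite{Deutsch, Bottcher-Spitkovsky}), and the elementary monotonicity immediate from \eqref{definiatin of cH K}: if $A'\subseteq A$ and $B'\subseteq B$ are closed subspaces of a Hilbert space with $A'\cap B'=A\cap B$, then $c(A',B')\le c(A,B)$, because then $A'\cap(A'\cap B')^\perp\subseteq A\cap(A\cap B)^\perp$ and likewise for the second factor, so the supremum defining $c(A',B')$ runs over a smaller set of unit vectors.

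The crux is a pair of intersection identities in $\mathscr E_f$. Since $\iota_f$ is contractive we have $(\overline{\mathscr L})_f=\overline{\iota_f(\mathscr L)}$ for any submodule $\mathscr L$, and $\iota_f$ is additive on submodules; combining this with the Hilbert-space identities $(\overline{M})^\perp=M^\perp$ and $(M+N)^\perp=M^\perp\cap N^\perp$ gives
\[
(\mathscr H_f)^\perp\cap(\mathscr K_f)^\perp=\big(\overline{\iota_f(\mathscr H+\mathscr K)}\big)^\perp=\big((\overline{\mathscr H+\mathscr K})_f\big)^\perp .
\]
Because $\overline{\mathscr H+\mathscr K}$ is orthogonally complemented, Proposition~\ref{Thorthogonaliy condition} rewrites the last term as $\big((\overline{\mathscr H+\mathscr K})^\perp\big)_f=(\mathscr H^\perp\cap\mathscr K^\perp)_f$; since $(\mathscr H^\perp\cap\mathscr K^\perp)_f\subseteq(\mathscr H^\perp)_f\cap(\mathscr K^\perp)_f\subseteq(\mathscr H_f)^\perp\cap(\mathscr K_f)^\perp$, these three subspaces coincide. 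Dually, $\overline{\mathscr H^\perp+\mathscr K^\perp}$ is orthogonally complemented exactly because $(\mathscr H,\mathscr K)$ is concordant, its orthogonal complement being $\mathscr H\cap\mathscr K$; the same computation together with Proposition~\ref{Thorthogonaliy condition} and Corollary~\ref{concordant condition} yields
\[
\big((\mathscr H^\perp)_f\big)^\perp\cap\big((\mathscr K^\perp)_f\big)^\perp=\big((\overline{\mathscr H^\perp+\mathscr K^\perp})_f\big)^\perp=(\mathscr H\cap\mathscr K)_f=\mathscr H_f\cap\mathscr K_f .
\]

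Finally I would assemble the two inequalities. Applying the $\perp$-invariance on the right and then monotonicity to the inclusions $\mathscr H_f\subseteq\big((\mathscr H^\perp)_f\big)^\perp$ and $\mathscr K_f\subseteq\big((\mathscr K^\perp)_f\big)^\perp$ (legitimate since their intersections agree, by the second identity) gives $c(\mathscr H_f,\mathscr K_f)\le c\big(((\mathscr H^\perp)_f)^\perp,((\mathscr K^\perp)_f)^\perp\big)=c\big((\mathscr H^\perp)_f,(\mathscr K^\perp)_f\big)$. Applying the $\perp$-invariance on the left and then monotonicity to $(\mathscr H^\perp)_f\subseteq(\mathscr H_f)^\perp$ and $(\mathscr K^\perp)_f\subseteq(\mathscr K_f)^\perp$ (intersections agree, by the first identity) gives $c\big((\mathscr H^\perp)_f,(\mathscr K^\perp)_f\big)\le c\big((\mathscr H_f)^\perp,(\mathscr K_f)^\perp\big)=c(\mathscr H_f,\mathscr K_f)$. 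Hence equality holds for every $f$, and taking suprema proves the theorem. I expect the real work to lie in the two localized intersection identities---in particular, checking that localization commutes with the closure of a sum of submodules, and identifying precisely which submodule ($\overline{\mathscr H+\mathscr K}$ in one identity, $\overline{\mathscr H^\perp+\mathscr K^\perp}$ in the other) must be fed into Proposition~\ref{Thorthogonaliy condition}, so that the hypothesis that $\overline{\mathscr H+\mathscr K}$ is complemented and the concordance hypothesis are each invoked exactly once. The remaining steps are the two quoted Hilbert-space facts and routine bookkeeping.
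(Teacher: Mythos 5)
Your proposal is correct and follows essentially the same route as the paper: reduce to a per-state equality, establish the two localized intersection identities $((\mathscr H^\perp)_f)^\perp\cap((\mathscr K^\perp)_f)^\perp=\mathscr H_f\cap\mathscr K_f$ (from concordance via Theorem~\ref{th concorant} and Corollary~\ref{concordant condition}) and $(\mathscr H^\perp)_f\cap(\mathscr K^\perp)_f=(\mathscr H_f)^\perp\cap(\mathscr K_f)^\perp$ (from the complementedness of $\overline{\mathscr H+\mathscr K}$), and squeeze using the classical duality $c(M,N)=c(M^\perp,N^\perp)$. The only cosmetic difference is that you obtain the inequality $c(\mathscr H_f,\mathscr K_f)\le c((\mathscr H^\perp)_f,(\mathscr K^\perp)_f)$ by a clean monotonicity observation on the definition of $c$, where the paper runs a chain of projection-norm estimates via $c(M,N)=\|P_MP_NP_{(M\cap N)^\perp}\|$.
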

\begin{proof}
	Since   $(\mathscr H,\mathscr K)$ is concordant,  Theorems~\ref{th concorant} and Corollary \ref{concordant condition} ensure that
	$$\big((\mathscr H^\perp)_f\big)^\perp\cap \big((\mathscr K^\perp)_f\big)^\perp=(\mathscr H\cap \mathscr K )_f=\mathscr H_f\cap\mathscr K_f,\quad ( f\in \mathrm{S}(\mathscr A)).$$
	It follows from \cite[Lemma 10, Theorem 16]{Deutsch} that for every $f\in \mathrm{S}(\mathscr A)$,
	\begin{align*}
	c(\mathscr (\mathscr H^\perp)_f,\mathscr (\mathscr K^\perp)_f)&=c( (\mathscr H^\perp)_f^\perp, (\mathscr K^\perp)_f^\perp)\\
	&=\|P_{(\mathscr H^\perp)_f^\perp}P_{(\mathscr K^\perp)_f^\perp}-P_{(\mathscr H^\perp)_f^\perp\cap(\mathscr K^\perp)_f^\perp}\|\\
	&=\|P_{(\mathscr H^\perp)_f^\perp}P_{(\mathscr K^\perp)_f^\perp}-P_{\mathscr H_f\cap\mathscr K_f}\|\\
	&=\|P_{(\mathscr H^\perp)_f^\perp}P_{(\mathscr K^\perp)_f^\perp}P_{(\mathscr H\cap\mathscr K)_f^\perp}\|\\
	&\geq \|P_{\mathscr H_f}P_{(\mathscr K^\perp)_f^\perp}P_{(\mathscr H\cap\mathscr K)_f^\perp}\|\qquad({\rm since ~}P_{(\mathscr H^\perp)_f^\perp}\geq P_{\mathscr H_f})\\ 
	&= \|P_{\mathscr H_f}P_{(\mathscr H\cap\mathscr K)_f^\perp}P_{(\mathscr K^\perp)_f^\perp}\|\\ 
	&\geq \|P_{\mathscr H_f}P_{(\mathscr H\cap\mathscr K)_f^\perp}P_{\mathscr K_f}\|=\|P_{\mathscr H_f}P_{\mathscr K_f}P_{(\mathscr H\cap\mathscr K)_f^\perp}\|=c(\mathscr H_f,\mathscr K_f)\,,
	\end{align*}
	which gives
	\begin{equation}\label{equ:norm-1 for f}
	c(\mathscr (\mathscr H^\perp)_f,\mathscr (\mathscr K^\perp)_f)\geq c(\mathscr H_f,\mathscr K_f)\qquad(f\in \mathrm{S}(\mathscr A))\,.
	\end{equation}
Therefore,
		\begin{equation}\label{equ:norm-1 for alpha}
	\alpha(\mathscr  H^\perp,\mathscr K^\perp)\geq \alpha(\mathscr H,\mathscr K)\,.
	\end{equation}
	Since $\overline{(\mathscr H+\mathscr K)}\oplus (\mathscr H^\perp\cap \mathscr K^\perp)=\mathscr E$, we have $\overline{(\mathscr H^{\perp\perp}+\mathscr K^{\perp\perp})}\oplus (\mathscr H^\perp\cap \mathscr K^\perp)=\mathscr E$. This entails that $(\mathscr H^\perp,\mathscr K^\perp)$ is concordant. Hence,
	\begin{equation}\label{concordant perp}
(\mathscr H^\perp)_f\cap (\mathscr K^\perp)_f=(\mathscr H^\perp\cap \mathscr K^\perp)_f=\big((\mathscr H+\mathscr K)^\perp\big)_f=\big((\mathscr H+\mathscr K)_f\big)^\perp= \mathscr H_f^\perp\cap\mathscr K_f^\perp\,,
	\end{equation}
	for each state $f$ on $\mathscr A$.
	  Now, if we set $H_f=(\mathscr H^\perp)_f\cap ((\mathscr H^\perp)_f\cap (\mathscr K^\perp)_f)^\perp$, then we have 
	\[
	H_f\subseteq (\mathscr H_f)^\perp\cap \big((\mathscr H^\perp)_f\cap (\mathscr K^\perp)_f\big)^\perp:= H'_f\,.
	\]
	By the same reasoning, $K_f\subseteq K'_f$.
	Hence,
	\begin{align*}
	c((\mathscr H^\perp)_f,(\mathscr K^\perp)_f)
	&=\sup\{\|\langle \tilde{u},\tilde{v}\rangle\|;  \tilde{u}\in H_f, \| \tilde{u}\|\leq 1, \tilde{v}\in K_f,\| \tilde{v}\|\leq 1 \}\\
	&\leq \sup\{\|\langle \tilde{u},\tilde{v}\rangle\|;  \tilde{u}\in H'_f, \| \tilde{u}\|\leq 1, \tilde{v}\in K'_f,\| \tilde{v}\|\leq 1 \}\\
		&=c(\mathscr H_f^\perp,\mathscr K_f^\perp)\qquad({{\rm by~}\eqref{concordant perp}})\\
&=c(\mathscr H_f,\mathscr K_f)
	\end{align*}
	Thus $c(\mathscr H_f,\mathscr K_f)=c((\mathscr H^\perp)_f,(\mathscr K^\perp)_f)$, and hence, $\alpha(\mathscr H,\mathscr K)=\alpha(\mathscr H^\perp,\mathscr K^\perp)$
\end{proof}
We use the following lemma in the next theorem.
\begin{lemma}\cite[Lemma 5,Lemma 9]{Deutsch}\label{lemma projection}
Let $\mathscr M$ and $\mathscr N$ be closed subspaces of a Hilbert space $\mathscr H$. Then the following statements are equivalent:
\begin{itemize}
\item[(i)] $P_\mathscr M$ and $P_\mathscr N$ commute.
\item[(ii)] $P_\mathscr MP_\mathscr N=P_{\mathscr M\cap\mathscr N}$.
\item[(iii)] $P_{\mathscr M^\perp}$ and $P_{\mathscr N^\perp}$ commute.
\item[(iv)] $P_{\mathscr M^\perp}$ and $P_{\mathscr N}$ commute.
\item[(v)] $P_\mathscr M$ and $P_{\mathscr N^\perp}$ commute.
\item[(vi)] $\mathscr M=(\mathscr M\cap \mathscr N)+(\mathscr M\cap \mathscr N^\perp)$.
 	\end{itemize}
\end{lemma}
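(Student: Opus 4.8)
The statement is a classical characterization of commuting orthogonal projections, and the plan is to prove it by a short cycle of elementary implications, making systematic use of the identities $P_{\mathscr M^\perp}=I-P_\mathscr M$ and $P_{\mathscr N^\perp}=I-P_\mathscr N$. Writing $[A,B]=AB-BA$, these give $[P_{\mathscr M^\perp},P_{\mathscr N^\perp}]=[I-P_\mathscr M,I-P_\mathscr N]=[P_\mathscr M,P_\mathscr N]$ and likewise $[P_{\mathscr M^\perp},P_\mathscr N]=-[P_\mathscr M,P_\mathscr N]=[P_\mathscr M,P_{\mathscr N^\perp}]$. Hence the four conditions (i), (iii), (iv), (v) are equivalent with no further argument, and it remains only to relate (i) to (ii) and to (vi).

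For (i)$\Longrightarrow$(ii): put $E=P_\mathscr M P_\mathscr N$. Using (i), $E^*=P_\mathscr N P_\mathscr M=P_\mathscr M P_\mathscr N=E$ and $E^2=P_\mathscr M(P_\mathscr N P_\mathscr M)P_\mathscr N=P_\mathscr M^2 P_\mathscr N^2=E$, so $E$ is an orthogonal projection. Since $Ex=P_\mathscr M(P_\mathscr N x)\in\mathscr M$ and $Ex=P_\mathscr N(P_\mathscr M x)\in\mathscr N$ we get $\mathscr R(E)\subseteq\mathscr M\cap\mathscr N$, while $Ex=P_\mathscr M x=x$ for $x\in\mathscr M\cap\mathscr N$; thus $E=P_{\mathscr M\cap\mathscr N}$, which is (ii). Conversely, if (ii) holds, taking adjoints yields $P_\mathscr N P_\mathscr M=(P_\mathscr M P_\mathscr N)^*=P_{\mathscr M\cap\mathscr N}^*=P_{\mathscr M\cap\mathscr N}=P_\mathscr M P_\mathscr N$, which is (i).

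For (i)$\Longrightarrow$(vi): given $x\in\mathscr M$, decompose $x=P_\mathscr N x+(I-P_\mathscr N)x$; commutativity gives $P_\mathscr N x=P_\mathscr M(P_\mathscr N x)\in\mathscr M\cap\mathscr N$ and $(I-P_\mathscr N)x=P_\mathscr M\big((I-P_\mathscr N)x\big)\in\mathscr M\cap\mathscr N^\perp$, so $\mathscr M\subseteq(\mathscr M\cap\mathscr N)+(\mathscr M\cap\mathscr N^\perp)$; the reverse inclusion is trivial. For (vi)$\Longrightarrow$(i): the two summands in (vi) are orthogonal (one lies in $\mathscr N$, the other in $\mathscr N^\perp$), so $P_\mathscr M=P_{\mathscr M\cap\mathscr N}+P_{\mathscr M\cap\mathscr N^\perp}$; then $P_\mathscr N P_\mathscr M=P_{\mathscr M\cap\mathscr N}$ because $P_\mathscr N$ fixes $\mathscr M\cap\mathscr N$ and annihilates $\mathscr M\cap\mathscr N^\perp$, and taking adjoints gives $P_\mathscr M P_\mathscr N=P_{\mathscr M\cap\mathscr N}=P_\mathscr N P_\mathscr M$. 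Every step is a one-line verification, so I do not expect a genuine obstacle; the only place requiring slight care is confirming in (i)$\Longrightarrow$(ii) that $E=P_\mathscr M P_\mathscr N$ is the \emph{orthogonal} projection onto $\mathscr M\cap\mathscr N$ rather than merely an idempotent with that range, which is precisely where the self-adjointness of $E$ — equivalently, the commutativity hypothesis — enters.
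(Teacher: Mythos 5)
Your proof is correct and complete: the commutator identities dispose of the equivalence of (i), (iii), (iv), (v), and your arguments linking (i) with (ii) and with (vi) are the standard ones, with the one delicate point (that $P_{\mathscr M}P_{\mathscr N}$ is the \emph{orthogonal} projection onto $\mathscr M\cap\mathscr N$, not merely an idempotent with that range) correctly identified and handled via self-adjointness. Note that the paper itself gives no proof of this lemma --- it is quoted from Deutsch \cite[Lemma~5, Lemma~9]{Deutsch} --- so there is nothing internal to compare against; your argument is exactly the classical one found in that reference.
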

\begin{theorem}
	Let  $\mathscr H$ and $\mathscr K$ be closed submodules of $\mathscr E$ such that $(\mathscr H, \mathscr K)$ and $(\mathscr H, \mathscr K^\perp)$ are concordant  and let $\overline{\mathscr H+\mathscr K}$ be an orthogonally complemented submodule. Then the following statements are equivalent:
\begin{itemize}
\item[(i)] $\alpha(\mathscr H, \mathscr K)=0$.
\item[(ii)] $\mathscr H=(\mathscr H\cap \mathscr K)+(\mathscr H\cap \mathscr K^
\perp)$.
\end{itemize}
\end{theorem}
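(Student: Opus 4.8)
The plan is to pass to every state $f$ and reduce both conditions to a commutativity statement about projections on the Hilbert space $\mathscr E_f$, where Lemma~\ref{lemma projection} applies directly. By Definition of $\alpha$, condition (i) says $c(\mathscr H_f,\mathscr K_f)=0$ for every $f\in\mathrm S(\mathscr A)$. Since $(\mathscr H,\mathscr K)$ is concordant and $\overline{\mathscr H+\mathscr K}$ is orthogonally complemented, we may invoke the computations in the proof of Theorem~\ref{Friedrichs angle}: for each $f$ we have $(\mathscr H\cap\mathscr K)_f=\mathscr H_f\cap\mathscr K_f$ and hence $c(\mathscr H_f,\mathscr K_f)=\|P_{\mathscr H_f}P_{\mathscr K_f}-P_{\mathscr H_f\cap\mathscr K_f}\|$ by \eqref{equ:formula for c H K}. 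So (i) is equivalent to $P_{\mathscr H_f}P_{\mathscr K_f}=P_{\mathscr H_f\cap\mathscr K_f}$ for every $f\in\mathrm S(\mathscr A)$, which by Lemma~\ref{lemma projection}(ii)$\Leftrightarrow$(vi) is equivalent to $\mathscr H_f=(\mathscr H_f\cap\mathscr K_f)+(\mathscr H_f\cap\mathscr K_f^\perp)$ for every $f$.

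Next I would rewrite the right-hand subspaces localizing condition (ii). The concordance of $(\mathscr H,\mathscr K)$ gives $(\mathscr H\cap\mathscr K)_f=\mathscr H_f\cap\mathscr K_f$ by Corollary~\ref{concordant condition}. For the term $\mathscr H\cap\mathscr K^\perp$ I would use the concordance of $(\mathscr H,\mathscr K^\perp)$: Corollary~\ref{concordant condition} applied to that pair yields $(\mathscr H\cap\mathscr K^\perp)_f=\mathscr H_f\cap(\mathscr K^\perp)_f$, and since $(\mathscr H,\mathscr K^\perp)$ concordant forces $\mathscr K^\perp$ (and also $\mathscr H$) to be orthogonally complemented — indeed $\overline{\mathscr H^\perp+\mathscr K}$ is complemented, and one checks $\mathscr K$ complemented as well from $\overline{\mathscr H+\mathscr K}$ being complemented — Proposition~\ref{Thorthogonaliy condition} gives $(\mathscr K^\perp)_f=(\mathscr K_f)^\perp$. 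Therefore $(\mathscr H\cap\mathscr K^\perp)_f=\mathscr H_f\cap\mathscr K_f^\perp$. Taking the (closed) sum and using that the sum of the two orthogonally complemented pieces $\mathscr H\cap\mathscr K$ and $\mathscr H\cap\mathscr K^\perp$ behaves well under localization (their localizations are mutually orthogonal closed subspaces of $\mathscr H_f$, so the sum is already closed and localizes to the sum of the localizations), one obtains
\[
\bigl((\mathscr H\cap\mathscr K)+(\mathscr H\cap\mathscr K^\perp)\bigr)_f=(\mathscr H_f\cap\mathscr K_f)+(\mathscr H_f\cap\mathscr K_f^\perp),\qquad f\in\mathrm S(\mathscr A).
\]

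Finally I would combine these with Lemma~\ref{lemma eq}: the submodule equality in (ii), $\mathscr H=(\mathscr H\cap\mathscr K)+(\mathscr H\cap\mathscr K^\perp)$, holds if and only if $\mathscr H_f=(\mathscr H_f\cap\mathscr K_f)+(\mathscr H_f\cap\mathscr K_f^\perp)$ for every $f\in\mathrm S(\mathscr A)$ — noting the left side $(\mathscr H\cap\mathscr K)+(\mathscr H\cap\mathscr K^\perp)$ is a closed submodule, being the internal orthogonal sum of two orthogonally complemented submodules, so Lemma~\ref{lemma eq} is applicable. This last condition is exactly the reformulation of (i) obtained in the first paragraph via Lemma~\ref{lemma projection}, so (i)$\Leftrightarrow$(ii). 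The main obstacle I anticipate is the bookkeeping around orthogonal complementarity: one must be careful that all submodules in sight ($\mathscr H$, $\mathscr K$, $\mathscr K^\perp$, $\mathscr H\cap\mathscr K$, $\mathscr H\cap\mathscr K^\perp$, and their sum) are orthogonally complemented so that Proposition~\ref{Thorthogonaliy condition}, Corollary~\ref{concordant condition} and \eqref{equ:formula for c H K} are legitimately available, and that localization commutes with the operations $\cap$ and $+$ in precisely the instances used — this is where the two concordance hypotheses and the complementarity of $\overline{\mathscr H+\mathscr K}$ do the real work.
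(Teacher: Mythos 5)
Your reduction of (i) to the localized condition $\mathscr H_f=(\mathscr H_f\cap\mathscr K_f)+(\mathscr H_f\cap\mathscr K_f^\perp)$ for all $f$ (via \eqref{equ:formula for c H K} in the Hilbert space $\mathscr E_f$ and Lemma \ref{lemma projection}), as well as your argument for (ii)$\Rightarrow$(i), are sound and agree with the paper. The gap is in (i)$\Rightarrow$(ii), at the identity $(\mathscr H\cap\mathscr K^\perp)_f=\mathscr H_f\cap(\mathscr K_f)^\perp$. Corollary \ref{concordant condition} applied to $(\mathscr H,\mathscr K^\perp)$ gives only $(\mathscr H\cap\mathscr K^\perp)_f=\mathscr H_f\cap(\mathscr K^\perp)_f$, and in general one merely has $(\mathscr K^\perp)_f\subseteq(\mathscr K_f)^\perp$; equality for all $f$ is, by Proposition \ref{Thorthogonaliy condition}, equivalent to $\mathscr K$ being orthogonally complemented, which is not among the hypotheses. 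Your justifications for it are not valid: complementedness of $\overline{\mathscr H+\mathscr K}$ does not imply that of $\mathscr K$ (take $\mathscr H=\mathscr E$), and concordance of $(\mathscr H,\mathscr K^\perp)$ does not force $\mathscr H$ or $\mathscr K^\perp$ to be complemented (the pair $(0,\mathscr K^\perp)$ is concordant for every $\mathscr K$). With only the inclusion $(\mathscr H\cap\mathscr K^\perp)_f\subseteq\mathscr H_f\cap(\mathscr K_f)^\perp$, your chain yields $\big((\mathscr H\cap\mathscr K)+(\mathscr H\cap\mathscr K^\perp)\big)_f\subseteq\mathscr H_f$, i.e.\ only the trivial inclusion $(\mathscr H\cap\mathscr K)+(\mathscr H\cap\mathscr K^\perp)\subseteq\mathscr H$.

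The paper's proof is built precisely to avoid passing through $(\mathscr K_f)^\perp$. It first uses the complementedness of $\overline{\mathscr H+\mathscr K}$ --- a hypothesis that does no work in your argument, which is itself a warning sign --- together with (i) to show that $P_{(\mathscr H^\perp)_f}$ and $P_{(\mathscr K^\perp)_f}$ commute; it then applies Lemma \ref{lemma projection}(vi) to the subspace $\big((\mathscr H^\perp)_f\big)^\perp\supseteq\mathscr H_f$, decomposing it as $\big[((\mathscr H^\perp)_f)^\perp\cap((\mathscr K^\perp)_f)^\perp\big]+\big[((\mathscr H^\perp)_f)^\perp\cap(\mathscr K^\perp)_f\big]$. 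The first bracket equals $(\mathscr H\cap\mathscr K)_f$ by Theorem \ref{th concorant} for $(\mathscr H,\mathscr K)$, and the second is contained in $((\mathscr H^\perp)_f)^\perp\cap((\mathscr K^{\perp\perp})_f)^\perp=(\mathscr H\cap\mathscr K^\perp)_f$ by Theorem \ref{th concorant} for $(\mathscr H,\mathscr K^\perp)$, using only the automatic inclusion $(\mathscr K^\perp)_f\subseteq((\mathscr K^{\perp\perp})_f)^\perp$. To repair your route you would have to either prove that the stated hypotheses force $\mathscr K$ to be orthogonally complemented, or switch to the ``perp of localized perp'' form of the concordance identity as the paper does.
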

\begin{proof}
(i)$\Rightarrow$(ii). Let $f\in \mathrm{S}(\mathscr A)$. Since $\overline{\mathscr H+\mathscr K}$ is  orthogonally complemented, we have  $\mathscr E=(\overline{\mathscr H+\mathscr K})\oplus(\mathscr H^\perp\cap\mathscr K^\perp)$. Hence,
\[
\mathscr E_f=(\overline{\mathscr H+\mathscr K})_f\oplus(\mathscr H^\perp\cap \mathscr K^\perp)_f\subseteq (\overline{\mathscr H+\mathscr K})_f+[(\mathscr H_f)^\perp\cap (\mathscr K_f)^\perp].
\]
It is clear that $(\overline{\mathscr H+\mathscr K})_f$ is orthogonal to $(\mathscr H_f)^\perp\cap (\mathscr K_f)^\perp$, hence 
\begin{equation}\label{eq equiation of projection}
(\mathscr H^\perp\cap \mathscr K^\perp)_f=(\mathscr H_f)^\perp\cap (\mathscr K_f)^\perp\,.
\end{equation}
It follows from Theorem \ref{th concorant} and Corollary \ref{concordant condition} that $\overline{(\mathscr H^\perp)_f+(\mathscr K^\perp)_f}=(\mathscr H_f)^\perp+(\mathscr K_f)^\perp$. Thus 
\begin{align*}
P_{(\mathscr H_f)^\perp}+ P_{(\mathscr K_f)^\perp}-P_{(\mathscr H^\perp\cap \mathscr K^\perp)_f}&=
 P_{(\mathscr H_f)^\perp}+ P_{(\mathscr K_f)^\perp}-P_{(\mathscr H_f)^\perp\cap (\mathscr K_f)^\perp}\quad(\mbox{by ~\eqref{eq equiation of projection}})\\
 &=P_{(\mathscr H_f)^\perp+(\mathscr K_f)^\perp}\\
 &=P_{\overline{(\mathscr H^\perp)_f+(\mathscr K^\perp)_f}}
\end{align*}
By multiplying the both sides of above equality from left and right with $P_{(\mathscr H^\perp)_f}$ and  $P_{(\mathscr K^\perp)_f}$, respectively, we get 
\begin{equation}\label{eq 1}
P_{(\mathscr H^\perp)_f}P_{(\mathscr K^\perp)_f}-P_{(\mathscr H^\perp\cap \mathscr K^\perp)_f}=0
\end{equation}

From \eqref{eq 1} and \cite[Lemma 5]{Deutsch} we get $P_{(\mathscr K^\perp)_f}P_{(\mathscr H^\perp)_f}=P_{(\mathscr H^\perp)_f}P_{(\mathscr K^\perp)_f}$. Thus,
\begin{align*}
\mathscr H_f&\subseteq (\mathscr H^\perp)_f^\perp\\
&=\big[(\mathscr H^\perp)_f^\perp\cap (\mathscr K^\perp)_f^\perp\big]+\big[(\mathscr H^\perp)_f^\perp\cap (\mathscr K^\perp)_f\big]\quad(\mbox{by~Lemma \ref{lemma projection}})\\
&\subseteq (\mathscr H\cap \mathscr K)_f+(\mathscr H^\perp)_f^\perp\cap (\mathscr K^{\perp\perp})_f^\perp\\
&=(\mathscr H\cap \mathscr K)_f+(\mathscr H\cap \mathscr K^\perp)_f\\
&=\big((\mathscr H\cap \mathscr K)+(\mathscr H\cap \mathscr K^\perp)\big)_f\\
&\subseteq \mathscr H_f\,
\end{align*}
Therefore, by Lemma \ref{lemma eq}, we conclude that $\mathscr H=(\mathscr H\cap \mathscr K)+(\mathscr H\cap \mathscr K^\perp)$.\\
(ii)$\Rightarrow$(i). Let $f\in \mathrm{S}(\mathscr A)$ be arbitrary. Thus,
\begin{align*}
\mathscr H_f&=(\mathscr H\cap \mathscr K)_f+(\mathscr H\cap \mathscr K^\perp)_f\\
&\subseteq(\mathscr H_f\cap \mathscr K_f)+(\mathscr H_f\cap(\mathscr K_f)^\perp)\\
&\subseteq \mathscr H_f\,.
\end{align*}
This shows that $P_{\mathscr H_f}$ and $P_{\mathscr K_f}$ commute. So $c(\mathscr H_f,\mathscr K_f)=0$. Since $f$ is arbitrary, we conclude that $\alpha(\mathscr H,\mathscr K)=0$.  
\end{proof}
The following example shows that $\alpha(\mathscr H,\mathscr K)$ is different from $c(\mathscr H,\mathscr K)$ if $\mathscr H$ and $\mathscr K$ are closed submodules( not orthogonally complemented) in $\mathscr E$ such that $(\mathscr H, \mathscr K)$ is concordant.
\begin{example}
	Let $X=[-2,-1]\cup [0,1]$. Let $\mathscr A=\mathrm{C}(X)$ and let $\mathscr E=\mathscr A$. Let 
	\[
	\mathscr H=\{\tau\in \mathscr E: \tau|_{[0, \frac{2}{3}]}=0\}\quad \mbox{and}\quad \mathscr K=\{\tau\in \mathscr E: \tau|_{[\frac{1}{3},1]}=0\} \,.
	\]
	Then 
	\[
	\mathscr H^\perp=\{\tau\in \mathscr E: \tau|_{[-2,-1]\cup[ \frac{2}{3},1]}=0\}\quad \mbox{and}\quad \mathscr K^\perp=\{\tau\in \mathscr E: \tau|_{[-2,-1]\cup[0, \frac{1}{3}]}=0\} \,.
	\]
	Note that
	\begin{equation}\label{eq c[0,1]}
\mathscr H\cap \mathscr K=\{\tau\in \mathscr E:\tau|_{[0,1]}=0\}\,.
	\end{equation}
	  This implies that   $(\mathscr H\cap \mathscr K)+(\mathscr H^\perp+\mathscr K^\perp)=\mathscr E$. Hence, $(\mathscr H,\mathscr K)$ is a concordant pair of closed submodules of $\mathscr E$. Let $f$ be a  state on $\mathscr A$.  Since $(\mathscr H,\mathscr K)$ is a concordant pair, by Proposition \ref{Thorthogonaliy condition}, Corollary \ref{concordant condition}, and \eqref{eq c[0,1]}, we have 
	  \begin{align*}
	  (\mathscr H_f\cap \mathscr K_f)^\perp=[(\mathscr H\cap \mathscr K)_f]^\perp
	  =[(\mathscr M\cap \mathscr N)^\perp]_f
	  =\overline{\{\tau+\mathscr N_f;\tau\in \mathscr A, \tau|_{[-2,-1]}=0\}}\,.
	  \end{align*}
	If we set $H_f=\mathscr H_f\cap (\mathscr H_f\cap \mathscr K_f)^\perp$ and $K_f=\mathscr K_f\cap(\mathscr H_f\cap\mathscr K_f)^\perp$, then by employing \eqref{definiatin of cH K} in the Hilbert space case, we get 
	\begin{align*}
	c(\mathscr H_f,\mathscr K_f)&=\sup \{|f\langle \tau,\sigma\rangle|;\tau+\mathscr N_f\in H_f, \sigma+\mathscr N_f\in K_f\}\\
	&= \sup \{|f(\tau\bar{\sigma})|;\tau+\mathscr N_f\in H_f, \sigma+\mathscr N_f\in K_f\}=0\,.
	\end{align*}
Hence, $\alpha(\mathscr H,\mathscr K)=0$. Note that $c(\mathscr H,\mathscr K)=1$.
\end{example}
\begin{theorem}\label{thm:local char of separated pair}
Let $\mathscr H$ and $\mathscr K$ be closed submodules of $\mathscr E$ such that  $(\mathscr H^\perp,\mathscr K^\perp)$ is concordant. Then  $(\mathscr H^{\perp\perp},\mathscr K^{\perp\perp})$ is a separated pair if $\alpha_{0}(\mathscr H^{\perp\perp},\mathscr K^{\perp\perp})<1$. 
\end{theorem}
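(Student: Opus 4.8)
The goal is to verify the two defining properties of a separated pair for $\mathscr M:=\mathscr H^{\perp\perp}$ and $\mathscr N:=\mathscr K^{\perp\perp}$, namely $\mathscr M\cap\mathscr N=0$ and that $\mathscr M+\mathscr N$ is orthogonally complemented in $\mathscr E$. The plan is to first deduce, from the hypothesis $c:=\alpha_{0}(\mathscr H^{\perp\perp},\mathscr K^{\perp\perp})<1$, the uniform lower estimate
\[
\|x+y\|\ge\sqrt{1-c^{2}}\,\|x\|\quad\text{and}\quad\|x+y\|\ge\sqrt{1-c^{2}}\,\|y\|\qquad(x\in\mathscr M,\ y\in\mathscr N),
\]
and then to combine it with the concordance hypothesis. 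Granting the estimate: taking $y=-x$ gives $\mathscr M\cap\mathscr N=0$; and if $x_{n}+y_{n}\to z$ with $x_{n}\in\mathscr M$ and $y_{n}\in\mathscr N$, then $(x_{n})$ is Cauchy, so $x_{n}\to x\in\mathscr M$ and $y_{n}\to z-x\in\mathscr N$, proving $\mathscr M+\mathscr N$ closed. Since the concordance of $(\mathscr H^{\perp},\mathscr K^{\perp})$ asserts precisely that $\overline{\mathscr H^{\perp\perp}+\mathscr K^{\perp\perp}}=\overline{\mathscr M+\mathscr N}$ is orthogonally complemented in $\mathscr E$, the closedness of $\mathscr M+\mathscr N$ forces it to coincide with this complemented submodule, and $(\mathscr H^{\perp\perp},\mathscr K^{\perp\perp})$ is a separated pair.

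To prove the estimate, the plan is to localize. Fix $f\in\mathrm{S}(\mathscr A)$ and work in the Hilbert space $\mathscr E_{f}$. Since $\mathscr M_{f}$ and $\mathscr N_{f}$ are closed subspaces of $\mathscr E_{f}$, formula~\eqref{equ:formula for c H K++} applies and yields $\|P_{\mathscr M_{f}}P_{\mathscr N_{f}}\|=c_{0}(\mathscr M_{f},\mathscr N_{f})\le\alpha_{0}(\mathscr H^{\perp\perp},\mathscr K^{\perp\perp})=c<1$. For $u\in\mathscr M_{f}$, $v\in\mathscr N_{f}$ we have $\langle u,v\rangle=\langle u,P_{\mathscr M_{f}}P_{\mathscr N_{f}}v\rangle$, so $|\langle u,v\rangle|\le c\|u\|\,\|v\|$ and hence
\[
\|u+v\|^{2}=\|u\|^{2}+\|v\|^{2}+2\,\mathrm{Re}\langle u,v\rangle\ge(\|v\|-c\|u\|)^{2}+(1-c^{2})\|u\|^{2}\ge(1-c^{2})\|u\|^{2},
\]
and symmetrically $\|u+v\|^{2}\ge(1-c^{2})\|v\|^{2}$.

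To pass back to $\mathscr E$, take $x\in\mathscr M$, $y\in\mathscr N$ and apply the last display with $u=\iota_{f}(x)\in\iota_{f}(\mathscr M)\subseteq\mathscr M_{f}$ and $v=\iota_{f}(y)\in\mathscr N_{f}$; this reads $f(\langle x+y,x+y\rangle)\ge(1-c^{2})f(\langle x,x\rangle)$ for every state $f$ on $\mathscr A$. Since $\|a\|=\sup_{f\in\mathrm{S}(\mathscr A)}f(a)$ for every positive $a\in\mathscr A$, taking the supremum over $f$ yields $\|x+y\|^{2}\ge(1-c^{2})\|x\|^{2}$, and likewise $\|x+y\|^{2}\ge(1-c^{2})\|y\|^{2}$, which is the desired estimate.

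The step I expect to be the main obstacle is this globalization: one has to know that $c_{0}(\mathscr M_{f},\mathscr N_{f})<1$ holds uniformly over $f$ (which it does, being bounded by $\alpha_{0}(\mathscr H^{\perp\perp},\mathscr K^{\perp\perp})$), and that applying the supremum over states to both sides of the local inequality loses nothing, which works precisely because $\sup_{f}f(\langle z,z\rangle)=\|z\|^{2}$. A further subtlety worth noting is that $\mathscr H^{\perp\perp}$ and $\mathscr K^{\perp\perp}$ need not themselves be orthogonally complemented, so Corollary~\ref{corollary abs} cannot be quoted directly; the closedness of $\mathscr M+\mathscr N$ must be obtained from the estimate by hand, and the orthogonal complementarity of the sum is recovered only at the end from the concordance assumption.
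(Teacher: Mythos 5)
Your proof is correct and follows essentially the same route as the paper: localize via states to get the uniform estimate $\|x+y\|^2\ge(1-c^2)\|x\|^2$ for $x\in\mathscr H^{\perp\perp}$, $y\in\mathscr K^{\perp\perp}$ (the paper uses a single norming state $f_0$ with $f_0\langle x,x\rangle=\|x\|^2$ rather than a supremum over all states, but the content is identical), deduce closedness of the sum from a Cauchy-sequence argument, and invoke concordance of $(\mathscr H^\perp,\mathscr K^\perp)$ to upgrade closedness to orthogonal complementation. The one genuine divergence is the triviality of the intersection: you obtain $\mathscr H^{\perp\perp}\cap\mathscr K^{\perp\perp}=0$ directly from the norm estimate by taking $y=-x$, whereas the paper localizes again, applies Deutsch's theorem that $c_0<1$ forces $(\mathscr H^{\perp\perp})_f\cap(\mathscr K^{\perp\perp})_f=0$, uses concordance to identify this with $(\mathscr H^{\perp\perp}\cap\mathscr K^{\perp\perp})_f$, and concludes via Lemma~\ref{lemma eq}. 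Your version is shorter and, notably, does not use the concordance hypothesis for this step; your observation that Corollary~\ref{corollary abs} cannot be quoted directly because $\mathscr H^{\perp\perp}$ and $\mathscr K^{\perp\perp}$ need not be orthogonally complemented is also well taken.
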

\begin{proof}
	Suppose that $\alpha_0:=\alpha_{0}(\mathscr H^{\perp\perp},\mathscr K^{\perp\perp})<1$. Then for each $f\in \mathrm{S}(\mathscr A)$ we have 
	\begin{equation}\label{eq alpha0}
|f\langle x,y\rangle|\leq \alpha_{0}f\langle x,x\rangle ^\frac{1}{2}f\langle y,y\rangle ^\frac{1}{2} \qquad(x\in \mathscr H,y\in \mathscr K)\,.
	\end{equation}
	There is a state  $f_0\in \mathrm{S}(\mathscr A)$ such that $f_0\langle x,x\rangle=\|x\|^2$. By \eqref{eq alpha0},  we have 
	\begin{align}\label{eq 02}
	\|x+y\|^2\geq f_0\langle x+y, x+y\rangle&=f_0\langle x,x\rangle +f_0\langle y,y\rangle +2{\mathrm Re}f_0\langle x,y\rangle\nonumber\\
	&\geq  \|x\|^2 +f_0\langle y,y\rangle- 2|f_0\langle x,y\rangle|\nonumber\\
	&\geq  \|x\|^2 +f_0\langle y,y\rangle-2\alpha_{0}f_0\langle x,x\rangle ^\frac{1}{2}f_0\langle y,y\rangle ^\frac{1}{2}\nonumber\\
	&=(\|x\|-f_0\langle y,y\rangle^\frac{1}{2})^2+2(1-\alpha_0)\|x\|f_0\langle y,y\rangle^\frac{1}{2}
	\end{align}
	for each $x\in \mathscr H^{\perp\perp}$ and $y\in \mathscr K^{\perp\perp}$.
	Let $\lim_n(x_n+y_n)=z$, where  $x_n\in \mathscr H^{\perp\perp}$ and $y_n\in \mathscr K^{\perp\perp}$. It follows from \eqref{eq 02} that $\{x_n\}$ is a Cauchy sequence and so there is a $x\in \mathscr H^{\perp\perp}$ such that $\lim_nx_n=x$. Hence, there is a $y\in \mathscr K^{\perp\perp}$ such that $\lim_ny_n=y$. So $z=x+y$. This shows that $\mathscr H^{\perp\perp}+\mathscr K^{\perp\perp}$ is closed. Since $(\mathscr H^\perp,\mathscr K^\perp)$ is concordant, we conclude that $\mathscr H^{\perp\perp}+\mathscr K^{\perp\perp}$ is an orthogonally complemented submodule. \\
	Due to $(\mathscr H^\perp,\mathscr K^\perp)$ is concordant and $\alpha_0<1$ we have  $c_0((\mathscr H^{\perp\perp})_f, (\mathscr K^{\perp\perp})_f)<1$, which, by \cite[Theorem 12]{Deutsch}, yields that
	$$(\mathscr H^{\perp\perp}\cap \mathscr K^{\perp\perp})_f=(\mathscr H^{\perp\perp})_f\cap(\mathscr K^{\perp\perp})_f=\{0\}\,,\qquad(f\in \mathrm{S}(\mathscr A
	))\,.$$
	Hence, $\mathscr H^{\perp\perp}\cap \mathscr K^{\perp\perp}=\{0\}$. 
 \end{proof}

\medskip
\section*{Disclosure statement}

On behalf of all authors, the corresponding author states that there is no conflict of interest. Data sharing is not applicable to this paper as no datasets were generated or analysed during the current study.

\section*{Funding}

The fourth author is supported by the National Natural Science Foundation of China (11971136), and the fifth author is supported by the Youth Backbone Teacher Training Program of Henan Province (2017GGJS140).

\end{document}